\newtheorem{theo}{Theorem}[section]
\newtheorem{prop}{Proposition}[section]
\newtheorem{lemma}{Lemma}[section]
\newtheorem{cor}{Corollary}[section]
\newtheorem{remark}{Remark}[section]
\newcommand{\mcp}{\mathcal{P}}
\newcommand{\G}{\mathcal{G}_0}
\DeclareMathOperator{\card}{card}
\DeclareMathOperator{\sh}{Sh}
\DeclareMathOperator{\Cs}{Cs}
\DeclareMathOperator{\Ci}{Ci}
\DeclareMathOperator{\C}{C}
\begin{document}
\title[On the connections of generalized entropies]{On the connections of generalized entropies with Shannon and Kolmogorov-Sinai entropies}
\author{Fryderyk Falniowski}

\address[F.~Falniowski]{Department of Mathematics, Cracow University of Economics,
Rakowicka~27, 31-510 Krak\'ow, Poland}
\email{fryderyk.falniowski@uek.krakow.pl}
\date{\today}



\maketitle

\begin{abstract}
We consider the concept of generalized measure-theoretic entropy, where instead of the Shannon entropy function we consider an arbitrary concave function defined on the unit interval, vanishing in the origin. Under mild assumptions on this function we show that this isomorphism invariant is linearly dependent on the Kolmogorov-Sinai entropy.
\end{abstract}

\section{Introduction}
Dynamical and measure-theoretic (called also Kolmogorov-Sinai entropy) entropies are basic tools for investigating dynamical systems (see e.g. \cite{Downar,KH}). They were  extensively studied  and successfully applied among others in statistical physics  and quantum information. It appeared to be an exceptionally powerful tool for exploring nonlinear systems.
One of the biggest advantages of the Kolmogorov-Sinai entropies lies in the fact that it makes possible to distinguish the formally regular systems (those with the measure-theoretic entropy equal to zero) from the chaotic ones (with positive entropy, which implies positivity of topological entropy \cite{Misiurewicz}).

 The Kolmogorov-Sinai entropy of a~given transformation $T$ acting on a~probability space $(X,\Sigma,\mu)$ is defined  as the supremum over all finite measurable partitions $\mathcal{P}$   of the dynamical entropy of $T$ with respect to $\mathcal{P}$, denoted by $h(T,\mathcal{P})$.
As a dynamical counterpart of Shannon entropy, the entropy of transformation $T$ with respect to a~given partition $\mathcal{P}$ is defined  as the limit of the sequence $\left(\frac 1n H(\mathcal{P}_n)\right)_{n=1}^{\infty}$, where
\[
H(\mathcal{P}_n)=\sum\limits_{A\in \mathcal{P}_n} \eta\left(\mu(A)\right)
\]
with $\eta$ being the Shannon function given by $\eta(x)=-x\log x$ for $x>0$ with $\eta(0):=0$
and $\mathcal{P}_n$ is the join partition of partitions $T^{-i}\mathcal{P}$ for $i=0,...,n-1$.
The existence of the limit in the definition of the dynamical entropy follows from the subadditivity of $\eta$. The most common interpretation of this quantity is the average (over time and the phase space) one-step gain of information about the initial state. Taking supremum over all finite partitions we obtain an isomorphism invariant  which measures  the rate of producing randomness (chaos) by the system. 

Since Shannon's seminal paper \cite{Shannon}   many  generalizations of the concept of Shannon static entropy were considered, see Arimoto \cite{Arimoto}, R\'{e}nyi \cite{Renyi} and Csisz\'ar's survey article \cite{Csiszar2008}.
The dynamical and measure-theoretic counterparts were considered by few authors. De Paly \cite{de Paly1} proposed generalized dynamical entropies based on the concept of the relative static entropies. Unfortunately it appeared that, despite some special cases \cite{de Paly1, de Paly2} the explicit calculations of this invariant may not be possible. 
 Grassberger and Procaccia  proposed in \cite{GP83} a~dynamical counetrpart of the well-known generalization of Shannon entropy -- the R\'{e}nyi entropy, and its measure-theoretic counterpart were considered by Takens and Verbitski. They showed  that for ergodic transformations with positive measure-theoretic entropy, R\'{e}nyi entropies of a~measure-theoretic transformation  are either infinite or equal to the measure-theoretic entropy \cite{TakensVerb1998}. The answer for non-ergodic aperiodic transformations is different, for R\'{e}nyi entropies of order $\alpha >1$ they are equal to the essential infimum of the measure-theoretic entropies of measures forming the decomposition of a~given measure into ergodic components, while for $\alpha<1$ they are still  infinite \cite{TV2002}. In particular, this means that R\'{e}nyi entropies of order $\alpha<1$ are metric invariants sensitive to ergodicity.
Similar generalization was made by  Mes\'{o}n and Vericat \cite{MV96, MV} for so called Havrda-Charv\'{a}t-Tsallis entropy \cite{Havrda} and their results were similar to ones obtained by Takens and Verbitski in \cite{TakensVerb1998}.

Our approach is based on  Arimoto generalization applied to dynamical case. Instead of the Shannon function $\eta$ we consider a~concave function $g\colon [0,1]\mapsto \mathbb{R}$ such that $\lim\limits_{x\to 0^+}g(x)=g(0)=0$ and define the dynamical $g$-entropy of the finite partition $\mcp$ as \[h(g,T,\mcp)=\limsup_{n\to\infty}\frac 1n \sum_{A\in\mcp_n}g(\mu(A)).\]

The behaviour of the quotient $g(x)/\eta(x)$ as $x$ converges to zero appears to be crucial for our considerations. Mainly, defining \[\Ci (g):=\liminf_{x\to 0^+}\frac{g(x)}{\eta(x)}\;\;\text{and}\;\;\Cs(g):=\limsup_{x\to 0^+}\frac{g(x)}{\eta(x)}\] we will prove that \[\Ci(g)\cdot h(T,\mcp)\leq h(g,T,\mcp)\leq \Cs(g) \cdot h(T,\mcp).\] 
In the case of $\Ci(g)=\infty$ we will show that in every aperiodic system and for every $\gamma \geq 0$, there exists a finite partition $\mcp$ such that $h(g,T,\mcp)\geq \gamma$.

Taking the supremum over all partitions we obtain Kolmogorov entropy-like isomorphism invariant, which we will call the measure-theoretic $g$-entropy  of a~transformation with respect to an invariant measure. One might ask whether this invariant may give any new information about the system. We will prove (Theorem \ref{twhKSg}) that for  $g$  with $\Cs(g)<\infty$, this new invariant is linearly dependent on Kolmogorov-Sinai entropy. It means that in fact the Shannon entropy function is the most natural one -- not only it has all of the properties which the entropy function should have \cite{Downar}, but also considering different entropy functions we will not obtain essentially different invariant.
This result might has the other interpretation.
Ornstein and Weiss showed in \cite{OrnsteinWeiss} that every finitely observable invariant for the class of all ergodic processes has to be a~continuous function of the entropy. It is easy to see that any continuous function of the entropy is finitely observable -- one simply composes the entropy estimators with the continuous function itself. In other words an isomorphism invariant is finitely observable if and only if it is a~continuous function of the Kolmogorov-Sinai entropy. Therefore our result implies that the generalized measure-theoretic entropy is in fact finitely observable. It should be possible  to give a~more direct proof of the finite observability of the generalized measure-theoretic entropy but  the proof cannot be easier\footnote{Benjamin Weiss personal communication} than the proof that entropy itself is finitely observable, see \cite{Weiss}.

The paper is organized as follows: in the next section we give a~formal definition of the dynamical $g$-entropy and establish its basic properties. The subsequent section is devoted to the construction of a zero dynamical entropy process with a~given positive $g$-entropy.
 Finally, in the last section, we define a~measure-theoretic $g$-entropy of a~transformation and show connections between this new invariant and the Kolmogorov-Sinai entropy.

 \section{Basic facts and definitions}
Let $(X,\Sigma,\mu)$ be a Lebesgue space and let $g:[0,1] \mapsto \mathbb{R}$ be a~concave function with $g(0)=\lim\limits_{x\to 0^+}g(x)=0$.\footnote{We might assume only that $g(0)=0$, but then the idea of the dynamical $g$-entropy would fail, since if $\mcp_{n+1}\neq \mcp_n$ for every $n$ and $\lim\limits_{x\to 0^+}g(x)\neq 0$, then the dynamical $g$-entropy of the partition $\mcp$ would be infinite. Therefore, if $g$ is not well-defined at zero we will assume that $g(0):=\lim\limits_{x\to 0^+}g(x)$.} By $\mathcal{G}_0$ we will denote the set of all such functions. Every $g\in\G$ is subadditive, i. e.  $g(x+y)\leq g(x)+g(y)$ for every $x,y\in[0,1]$, and quasihomogenic, i.e.  $\varphi_g\colon (0,1]\to\mathbb{R}$ defined by $\varphi_g(x):=g(x)/x$ is decreasing (see \cite{Rosenbaum}).\footnote{If $g$ is fixed we will omit the index, writing just $\varphi$.}

For a~given finite partition $\mathcal{P}$ 
 we define the \textsf{$g$-entropy of the partition $\mathcal{P}$} as
\begin{equation}
H(g,\mathcal{P}):=\sum_{A\in\mcp} g\left(\mu(A)\right).
\end{equation}

For $g=\eta$ the latter is equal to the Shannon entropy of the partition $\mathcal{P}$. For two finite partitions
$\mathcal{P}$ and $\mathcal{Q}$ of the space $X$ we define a new partition
$\mathcal{P}\vee\mathcal{Q}$ (\textsf{join partition} of $\mathcal{P}$ and $\mathcal{Q}%
$) consisting of the subsets of the form $B\cap C$ where $B\in\mathcal{P}$ and
$C\in\mathcal{Q}$. The join partition of more than two partitions is defined similarly.

\subsection{Dynamical $g$-entropies.} For an automorphism $T\colon X\mapsto X$ and a partition $\mathcal{P}=\{E_{1},...,E_{k}\}$ we put
\[
T^{-j}\mathcal{P}:=\{T^{-j}E_{1},...,T^{-j}E_{k}\}
\]
and
\[
\mathcal{P}_{n}=\mathcal{P}\vee T^{-1}\mathcal{P}\vee...\vee T^{-n+1}%
\mathcal{P}.%
\]

Now for a~given $g\in\G$ and a finite partition $\mcp$ we can define the \textsf{dynamical $g$-entropy of the transformation $T$ with respect to $\mathcal{P}$}  as

\begin{equation}
\label{dynentr} h_{\mu}(g,T,\mcp)=\limsup_{n\to\infty}\frac 1n H\left(g,\mcp_n\right).
\end{equation}
Alternatively we will call it the $g$-entropy of the process $(X,\Sigma,\mu,T,\mcp)$. If the dynamical system $(X,\Sigma,T,\mu)$ is fixed then we omit $T$,  writing just $h(g,\mcp)$.  As in the case of Shannon dynamical entropies we are interested in the existence of the limit of $\left(\frac 1n H(g,\mcp_n)\right)_{n=1}^{\infty}$. If $g=\eta$, we obtain the Shannon dynamical entropy $h(T,\mcp)$. However, in the general case we can not replace an upper limit  in (\ref{dynentr}) by the limit, since it might not exist. Existence of the limit in the case of the  Shannon function follows from the subadditivity of the static Shannon entropy. This property has every subderivative function, i.e. function for which the inequality $g(xy)\leq xg(y)+yg(x)$ holds for any $x,y\in[0,1]$, but this is not true in general (an appropriate example will be given in Section 2.2). Therefore we propose more general classes of functions for which the limit exists. It  exists if $g$ belongs to one of two following classes: 
\[
\mathcal{G}_{0}^{0}:=\left\{g\in \mathcal{G}_0\;\left|\; \lim_{x\to 0^+} \frac{g(x)}{\eta(x)}=0\right.\right\}
\;\;\;\text{or}\;\;\;
\mathcal{G}_{0}^{\sh}:=\left\{g\in \mathcal{G}_0\;\left|\; 0<\lim_{x\to 0^+} \frac{g(x)}{\eta(x)}<\infty \right.\right\}.
\]
It is easy to show that if $g$ is subderivative then the limit $\lim\limits_{x\to 0^+}g(x)/\eta(x)$ is finite.
  Moreover we will see that values of dynamical $g$-entropies depend on the behaviour of  $g$ in the neighbourhood of zero.
We will prove that if $g\in\G^0\cup\G^{\sh}$, then there is a linear dependence between the dynamical $g$-entropy and the Shannon dynamical entropy of a given partition. 
Before we give the general result (Theorem \ref{hgp}) we will state few facts, which we will use in the proof of this theorem. We give the following lemmas ommiting their elementary proofs.
\begin{lemma}\label{lemmaalgebra}
Let $b_i> 0$, $a_i\in\mathbb{R}$ for $i=1,\ldots,m$. Then \[\min_{i=1,\ldots,m}\frac{a_i}{b_i}\leq \frac{\sum_{i=1}^m a_i}{\sum_{i=1}^m b_i}\leq \max_{i=1,\ldots,m}\frac{a_i}{b_i}.\]
\end{lemma}
\begin{lemma}
If $\mcp\in\mathfrak{B}$, $\delta>0$, and $g\colon [0,1]\mapsto \mathbb{R}$, then \begin{equation} \label{Gdelta}  \sum\limits_{ A\in\mcp,\; \mu(A)\geq \delta}g(\mu(A)) \leq \frac{1}{\delta}\max_{x\in [\delta,1]}g(x).\end{equation}
\end{lemma}
The following lemma states that the value of the dynamical $g$-entropy is given by the behaviour of $g$ in the neighbourhood of zero.

\begin{lemma} \label{g1rowng2}
If $g_1,g_2\in\G$ and there exists $c>0$ such that $g_1(x)=g_2(x)$ for $x\in[0,c]$, then for every $\mcp\in\mathfrak{B}$  $h(g_1,\mcp)=h(g_2,\mcp)$.
\end{lemma}
\begin{proof}
Let $\mcp\in\mathfrak{B}$ and $g_1,g_2\in\G$, $c>0$ be fullfill the assumptions. Since  $g\in\G$ is bounded we have
\begin{eqnarray}
|H(g_1,\mcp_n)-H(g_2,\mcp_n)|&=&\left|\sum_{A\in\mcp_n:\;\mu(A)>c}(g_1(\mu(A))-g_2(\mu(A)))\right| \nonumber \\  &\leq & \frac 1c \max_{x\in[c,1]}|g_1(x)-g_2(x)|. \nonumber
\end{eqnarray} Dividing by $n$ and converging to infinity we obtain \[h(g_1,\mcp)=h(g_2,\mcp).\]
\end{proof}
We may state now the main theorem of this section.
\begin{theo} \label{hgp} Let $\mcp\in\mathfrak{B}$.
\begin{enumerate}
\item If  $g\in\G$ is such that $g'(0)<\infty$, then $h(g,\mcp)=0$.
\item If $g_1,g_2\in\G$ are such that $g_1'(0)=g_2'(0)=\infty$, \[\liminf\limits_{x\to 0^+}\frac{g_1(x)}{g_2(x)}<\infty,\] and $h(g_2,\mcp)<\infty$, then 
\[\liminf_{x\to 0^+}\frac{g_1(x)}{g_2(x)}\cdot h(g_2,\mcp)\leq h(g_1,\mcp).\]
If additionally $\limsup\limits_{x\to 0^+}\frac{g_1(x)}{g_2(x)}<\infty$, then
\[ h(g_1,\mcp)\leq \limsup_{x\to 0^+}\frac{g_1(x)}{g_2(x)} \cdot h(g_2,\mcp).\]
\item  If $h(g_2,\mcp)=\infty$ and $\liminf\limits_{x\to 0^+}\frac{g_1(x)}{g_2(x)}>0$, then $h(g_1,\mcp)=\infty$.
\end{enumerate}
\end{theo}

\begin{remark}
Whenever $g_2\colon [0,1]\mapsto \mathbb{R}$ is a~nonnegative concave function  satisfying $g_2(0)=0$ and $g_2'(0)=\infty$, we can have any pair $0< a\leq b \leq \infty$ as limit inferior and limit superior of $g_1/g_2$ in 0, choosing a suitable function $g_1$. The idea is as follows: construct $g_1$ piecewise linear. To do so define inductively a strictly decreasing sequence $x_k\to 0$, and a decreasing sequence of values $y_k=g_1(x_k)\to 0$, thus defining intervals $J_k:=[x_{k+1},x_k]$ where $g$ is affine. The only constraint to get a~concave function is that the slope of $g$ on each interval $J_k$ has to be smaller than $y_k/x_k$, and increasing with respect to $k$; this is not an obstruction to approach any limit inferior and limit superior for $g_1(x)/g_2(x)$, provided that $x_{k+1}>0$ is choosen small enough.
\end{remark}

\begin{proof}[Proof of Theorem \ref{hgp}] Let $\mcp\in\mathfrak{B}$. Suppose that $g\in\G$ and $g'(0)<\infty$. Then 
\begin{eqnarray}
h(g,\mcp)&=&\limsup_{n\to\infty}\frac 1n H(g,\mcp_n)\leq \limsup_{n\to\infty}\frac 1n \varphi\left(\frac{1}{\card \mcp_n}\right) \leq\lim_{n\to\infty} \frac {g'(0)}{n} =0,\nonumber
\end{eqnarray}
which completes the proof of point 1. To prove point 2 let $g_1,g_2\in\G$ be such that $g_1'(0)=g_2'(0)=\infty$ and $h(g_2,\mcp)<\infty$. W.l.o.g we can assume that $g_1(x),g_2(x)>0$ for $x\in (0,1)$, since if there exists $x_0\in(0,1)$ such that $g_i(x_0)=0$ for $i=1$ or $i=2$, then we can define  $\tilde{g}_i\colon [0,1]\mapsto \mathbb{R}$ as \[\tilde{g}_i(x):=\begin{cases}g_i(x), & \text{for} \; x\in[0,s_i) \\ g_i(s_i), & \text{for}\; x\in[s_i,1] \end{cases}\] 
where $s_i\in (0,1]$ is such that $\max\limits_{x\in[0,1]}g(x)=g(s_i)$. Function $\tilde{g}$ is strictly positive and by  Lemma \ref{g1rowng2} we have \[h(\tilde{g}_i,\mcp)=h(g_i,\mcp).\] 

We will assume that
\[\limsup\limits_{x\to 0^+}\frac{g_1(x)}{g_2(x)}<\infty.\] 
The estimation  of the lower boundary for $h(g_1,\mcp)$  remains correct if we omit this assumption.
Since $g$~is subadditive, the sequence $(H(g,\mcp_n))_{n=1}^{\infty}$ is nondecreasing and there exists the limit $\lim\limits_{n\to \infty} H(g_2,\mcp_n)$. If it is finite, then $h(g_2,\mcp)=0$ and by (\ref{Gdelta}) and Lemma \ref{lemmaalgebra} we have 
 \begin{eqnarray}\sum_{A\in\mcp_n}g_1(\mu(A))&\leq &\sum_{A\in\mcp_n:\;\mu(A)<\frac 12}g_1(\mu(A))+2\max_{x\in[\frac 12,1]}g_1(x)\nonumber \\ &\leq & \sup_{x\in(0,\frac 12)}\frac{g_1(x)}{g_2(x)}\cdot \sum_{A\in\mcp_n:\; \mu(A)<\frac 12}g_2(\mu(A))+ 2\max_{x\in[\frac 12,1]}g_1(x). \nonumber \end{eqnarray} Since $\limsup\limits_{x\to 0^+}\frac{g_1(x)}{g_2(x)}<\infty$, there exists $M>0$ such that $g_1(x)/g_2(x)<M$ for $x<1/2$. Therefore $\sup\limits_{x\in(0,\frac 12)}\frac{g_1(x)}{g_2(x)}<\infty$, and by Lemma  \ref{lemmaalgebra} we obtain \begin{eqnarray} 0\leq h(g_1,\mcp)&= &\limsup_{n\to\infty}\frac 1n \frac{H(g_1,\mcp_n)}{H(g_2,\mcp_n)}H(g_2,\mcp_n)\nonumber \\ &\leq & \sup_{x\in(0,\frac 12)}\frac{g_1(x)}{g_2(x)}\cdot\limsup_{n\to\infty}\frac 1n H(g_2,\mcp_n)=0.\nonumber \end{eqnarray}
Therefore we can assume that $\lim\limits_{n\to \infty} H(g_2,\mcp_n)=\infty$ 

Fix   $\varepsilon >0$. There exists $\delta>0$  such that for   $x\in (0,\delta]$ we have
\[\liminf_{x\to 0^+}\frac{g_1(x)}{g_2(x)}-\varepsilon<\frac{g_1(x)}{g_2(x)}\leq \limsup_{x\to 0^+}\frac{g_1(x)}{g_2(x)}+\varepsilon.\]

 Lemma \ref{lemmaalgebra} implies that
\begin{equation}\label{lilsg1g2} \liminf_{x\to 0^+}\frac{g_1(x)}{g_2(x)}-\varepsilon \leq \frac{\sum\limits_{A\in\mcp_n,\; \mu(A) < \delta}g_1(\mu(A))}{\sum\limits_{A\in\mcp_n,\;\mu(A)< \delta}g_2(\mu(A))} \leq \limsup_{x\to 0^+}\frac{g_1(x)}{g_2(x)} + \varepsilon.\end{equation}
Using (\ref{Gdelta}) for every $n>0$ we get
\[  \sum\limits_{ A\in\mcp_n,\; \mu(A)\geq \delta}g_i(\mu(B)) \leq \frac{1}{\delta}\overline{G_{\delta}^i}.\]
where $ \overline{G_{\delta}^i}:= \max\limits_{x\in [\delta,1]}g_i(x)$ for $i=1,2$. Therefore
\begin{eqnarray} \frac{\sum\limits_{ A\in\mcp_n:\;\mu(A) < \delta}g_1(\mu(A))}{\sum\limits_{ A\in\mcp_n:\;\mu(A) < \delta}g_2(\mu(A))+\frac{1}{\delta}\overline{G_{\delta}^2}}  &\leq &\nonumber  \frac{\sum\limits_{ A\in\mcp_n}g_1(\mu(A))}{\sum\limits_{ A\in\mcp_n}g_2(\mu(A))} \leq \frac{\sum\limits_{ A\in\mcp_n:\;\mu(A) < \delta}g_1(\mu(A))+\frac{1}{\delta}\overline{G_{\delta}^1}}{\sum\limits_{ A\in\mcp_n:\;\mu(A) < \delta}g_2(\mu(A))}. \end{eqnarray} and $\sum\limits_{ A\in\mcp_n:\;\mu(A) < \delta}g_2(\mu(A))\to \infty$ ($n\to\infty$). Dividing sums by $\sum\limits_{ A\in\mcp_n:\;\mu(A) < \delta}g_2(\mu(A))$ and from (\ref{lilsg1g2}) we obtain
\begin{eqnarray}\frac{\liminf\limits_{x\to 0^+}\frac{g_1(x)}{g_2(x)}-\varepsilon}{1+\overline{G_{\delta}^2}\left/\delta\sum\limits_{ A\in\mcp_n:\;\mu(A) < \delta}g_2(\mu(A))\right.} &\leq &\frac{\sum\limits_{ A\in\mcp_n}g_1(\mu(A))}{\sum\limits_{ A\in\mcp_n}g_2(\mu(A))} \nonumber\\ & \leq& \limsup\limits_{x\to 0^+}\frac{g_1(x)}{g_2(x)}-\varepsilon+\overline{G_{\delta}^1}\left/\delta\sum\limits_{ A\in\mcp_n:\;\mu(A) < \delta}g_2(\mu(A))\right.\nonumber\end{eqnarray}
Converging with $n$ to infinity we obtain:
\[\liminf_{x\to 0^+}\frac{g_1(x)}{g_2(x)}-\varepsilon\leq \liminf_{n\to\infty}\frac{H(g_1,\mcp_n)}{H(g_2,\mcp_n)}\leq \limsup_{n\to\infty}\frac{H(g_1,\mcp_n)}{H(g_2,\mcp_n)}\leq \limsup_{x\to 0^+}\frac{g_1(x)}{g_2(x)}+\varepsilon.\]
Therefore \begin{eqnarray}\left(\liminf_{x\to 0^+}\frac{g_1(x)}{g_2(x)}-\varepsilon\right)h(g_2,\mcp)&\leq & \liminf_{n\to\infty}\frac{H(g_1,\mcp_n)}{H(g_2,\mcp_n)} \cdot \limsup_{n\to\infty}\frac 1n H(g_2,\mcp_n)\nonumber \\ &\leq & \limsup_{n\to\infty}\frac 1n H(g_1,\mcp_n)  \nonumber \\
&\leq & \limsup_{n\to\infty}\frac{H(g_1,\mcp_n)}{H(g_2,\mcp_n)} \cdot \limsup_{n\to\infty}\frac 1n H(g_2,\mcp_n)\nonumber \\
&\leq &\left(\limsup_{x\to 0^+}\frac{g_1(x)}{g_2(x)}+\varepsilon\right)h(g_2,\mcp).\nonumber \end{eqnarray}
Therefore we obtain the assertion. In the case of infinite limit superior of the quotient $g_1(x)/g_2(x)$ we can repeat the above reasoning just omitting an upper bound for considered expressions.

If $\liminf\limits_{x\to 0^+}\frac{g_1(x)}{g_2(x)}>0$ and $h(g_2,\mcp)=\infty$, then $\varepsilon<\liminf\limits_{x\to 0^+}\frac{g_1(x)}{g_2(x)}$ and using similar arguments we obtain point 3.
\end{proof}

Using similar arguments we might obtain the answer in the case of infinite limit $\lim\limits_{x\to 0^+} g_1(x)/g_2(x)$ and positive dynamical $g_2$-entropy:
\begin{theo}\label{g1g2infty}
Let $g_1,g_2\in \mathcal{G}_0$ be such that $\lim\limits_{x\to 0^+}g_1(x)/g_2(x)=\infty$ and let a finite partition $\mcp$ has positive $g_2$-entropy. Then $h(g_1,\mcp)$ is infinite.
\end{theo}

Theorems \ref{hgp}, \ref{g1g2infty} imply few corollaries:
\begin{cor} \label{Cg1g2finite} If there exists the limit $\lim\limits_{x\to 0^+}\frac{g_1(x)}{g_2(x)}<\infty$, then $h(g_1,\mcp)=\lim\limits_{x\to 0^+}\frac{g_1(x)}{g_2(x)} \cdot h(g_2,\mcp)$.
\end{cor}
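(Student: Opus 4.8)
The plan is to read off the equality by squeezing $h(g_1,\mcp)$ between two copies of the same bound, invoking both halves of Theorem \ref{hgp} simultaneously. Write $L:=\lim_{x\to 0^+} g_1(x)/g_2(x)$ for the common value of the limit, which by hypothesis exists and is finite. The crucial observation is that the existence of the limit forces
\[\liminf_{x\to 0^+}\frac{g_1(x)}{g_2(x)}=\limsup_{x\to 0^+}\frac{g_1(x)}{g_2(x)}=L<\infty,\]
so both the hypothesis $\liminf<\infty$ needed for the lower bound and the extra hypothesis $\limsup<\infty$ needed for the upper bound in Theorem \ref{hgp} are automatically satisfied.

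First I would record the standing assumption that $\mcp$ has finite dynamical $g_2$-entropy, which is the hypothesis under which Theorem \ref{hgp} is stated. Granting this, the first inequality of the theorem yields $L\cdot h(g_2,\mcp)\leq h(g_1,\mcp)$, and the second yields $h(g_1,\mcp)\leq L\cdot h(g_2,\mcp)$. Chaining the two gives
\[L\cdot h(g_2,\mcp)\leq h(g_1,\mcp)\leq L\cdot h(g_2,\mcp),\]
whence $h(g_1,\mcp)=L\cdot h(g_2,\mcp)$, which is precisely the assertion. In this regime the corollary is genuinely immediate, the only content being the collapse of $\liminf$ and $\limsup$ to a single value.

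The step I expect to need the most care is the role of the finiteness hypothesis on $h(g_2,\mcp)$, namely the degenerate case $h(g_2,\mcp)=\infty$, where the product $L\cdot h(g_2,\mcp)$ must still be interpreted correctly. When $L>0$ this can be settled by symmetry: since $\lim_{x\to 0^+} g_2(x)/g_1(x)=1/L$ is again finite, applying Theorem \ref{hgp} with the roles of $g_1$ and $g_2$ exchanged shows that finiteness of $h(g_1,\mcp)$ would force $h(g_2,\mcp)\leq \tfrac1L\, h(g_1,\mcp)<\infty$; contraposition then gives $h(g_1,\mcp)=\infty=L\cdot h(g_2,\mcp)$. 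The one genuinely delicate situation left is $L=0$ together with $h(g_2,\mcp)=\infty$, where the right-hand side is the indeterminate $0\cdot\infty$; this lies outside the direct reach of Theorem \ref{hgp}, and I would either exclude it by retaining the finite-$g_2$-entropy assumption of the theorem or dispose of it by a separate truncation argument near $0$ estimating the contribution of the small atoms of $\mcp_n$.
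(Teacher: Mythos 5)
Your proof is correct and takes essentially the same route as the paper, which presents this corollary as an immediate consequence of Theorem \ref{hgp}: since the limit exists and is finite, $\liminf$ and $\limsup$ coincide at a common value $L$, and the two inequalities of the theorem squeeze $h(g_1,\mcp)$ to equal $L\cdot h(g_2,\mcp)$. Your extra discussion of the case $h(g_2,\mcp)=\infty$ goes beyond what the paper records (the paper implicitly retains the finite $g_2$-entropy hypothesis of Theorem \ref{hgp}), and your diagnosis there is sound, including the correct identification of $L=0$ with infinite $g_2$-entropy as the only genuinely indeterminate configuration.
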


Let us define \[\mathcal{G}_{0}^{\infty}:=\left\{g\in \mathcal{G}_0\;\left|\; \lim_{x\to 0^+} \frac{g(x)}{\eta(x)}=\infty \right.\right\}.\]

If $g_1=g$, $g_2=\eta$, then the following we have the following corollary
\begin{cor} \label{Cgfinite}
 Let $\mcp\in\mathfrak{B}$ and $g\in\G$. Then
\begin{enumerate}
\item If $\Ci(g)<\infty$, then $h(g,\mcp)\geq \Ci(g)\cdot h(\mcp)$.
\item If $\Cs(g)<\infty$, then $h(g,\mcp)\in\left(\Ci(g)\cdot h(\mcp),\Cs(g)\cdot h(\mcp)\right)$. 
\item If  $g\in \mathcal{G}_0^{0}\cup \G^{\sh}$, then $h(g,\mcp)=\C(g) \cdot h(\mcp)$.
\item If $g\in\G^{\infty}$ and $h(\mcp)>0$, then $h(g,\mcp)=\infty$.
\end{enumerate}
\end{cor}

\begin{cor}
If $(X,\Sigma,\mu,T)$ has positive Kolmogorov-Sinai entropy and $g\in\G$ then:\\ $\Cs(g)<\infty$ $\Rightarrow$ $g$-entropy of any process $(X,\Sigma,\mu,T,\mcp)$ is finite $\Rightarrow$ $\Ci(g)<\infty$. 
\end{cor}

\begin{cor}
If $g\in\G^0\cup\G^{\sh}$, then $h(g,\mcp)=\lim\limits_{n\to\infty}\frac 1n H(g,\mcp_n)$.
\end{cor}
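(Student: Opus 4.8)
The plan is to upgrade the $\limsup$ in the definition of $h(g,\mcp)$ to a genuine limit by showing that the corresponding $\liminf$ is at least as large as the value already pinned down by Corollary~\ref{Cgfinite}. Since $g\in\G^0\cup\G^{\sh}$, the limit $\C(g)=\lim_{x\to 0^+}g(x)/\eta(x)$ exists and is finite, and Corollary~\ref{Cgfinite} gives
\[
\limsup_{n\to\infty}\tfrac1n H(g,\mcp_n)=h(g,\mcp)=\C(g)\cdot h(\mcp).
\]
Moreover $h(\mcp)\leq\log\card\mcp<\infty$, because $\mcp_n$ has at most $(\card\mcp)^n$ atoms. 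Hence it suffices to prove
\[
\liminf_{n\to\infty}\tfrac1n H(g,\mcp_n)\geq \C(g)\cdot h(\mcp),
\]
for then the $\liminf$ and the $\limsup$ coincide and the sequence converges to $\C(g)\,h(\mcp)=h(g,\mcp)$.

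First I would fix $\varepsilon>0$ and, using that $g(x)/\eta(x)\to\C(g)$ together with $\eta(x)>0$ on $(0,1)$, choose $\delta>0$ so small that $(\C(g)-\varepsilon)\eta(x)\leq g(x)$ for every $x\in(0,\delta]$. Splitting the atoms of $\mcp_n$ according to whether their measure is $\geq\delta$ or $<\delta$, I write
\[
H(g,\mcp_n)=\sum_{\mu(B)\geq\delta}g(\mu(B))+\sum_{\mu(B)<\delta}g(\mu(B)),
\]
and similarly for the Shannon sum $H(\mcp_n)$. The key observation is that the large-atom contributions are bounded uniformly in $n$: at most $1/\delta$ atoms can have measure $\geq\delta$, and both $g$ and $\eta$ are bounded on $[\delta,1]$, so each of the sums $\sum_{\mu(B)\geq\delta}g(\mu(B))$ and $\sum_{\mu(B)\geq\delta}\eta(\mu(B))$ is bounded by a constant depending only on $\delta$. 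Dividing by $n$ and letting $n\to\infty$, these terms therefore vanish.

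It remains to control the small-atom part. On those atoms the choice of $\delta$ yields $\sum_{\mu(B)<\delta}g(\mu(B))\geq(\C(g)-\varepsilon)\sum_{\mu(B)<\delta}\eta(\mu(B))$, while the uniform bound above gives
\[
\tfrac1n\sum_{\mu(B)<\delta}\eta(\mu(B))=\tfrac1n H(\mcp_n)-\tfrac1n\sum_{\mu(B)\geq\delta}\eta(\mu(B))\xrightarrow[n\to\infty]{}h(\mcp),
\]
the convergence being that of the ordinary Shannon dynamical entropy. Combining these two facts yields $\liminf_{n}\tfrac1n H(g,\mcp_n)\geq(\C(g)-\varepsilon)h(\mcp)$, and letting $\varepsilon\to0$ (with $\delta$ shrinking accordingly) gives the required lower bound. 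The main point requiring care---though it is the same device already used in the proof of Theorem~\ref{hgp}---is that the pinching must be carried out at finite $n$ and the large-measure atoms controlled uniformly in $n$, so that the factor $1/n$ annihilates them; once that is in place the statement follows from a standard squeeze together with the existence of the Shannon limit.
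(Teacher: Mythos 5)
Your proposal is correct and takes essentially the same route as the paper: the corollary is exactly the ``squeeze'' implicit in Theorem~\ref{hgp}, namely Corollary~\ref{Cgfinite} fixes the $\limsup$ at $\C(g)\cdot h(\mcp)$, while your lower bound on the $\liminf$ reproduces the same small-atom/large-atom splitting used in the proof of Theorem~\ref{hgp}, combined with the existence and finiteness of the Shannon limit $\frac1n H(\mcp_n)\to h(\mcp)$. The paper leaves this step implicit (``immediately implies''), and your explicit $\liminf$ argument is precisely what that implication amounts to.
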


\subsection{Case of $g\in\G^{\infty}$}

We will prove that for every $g\in\G^{\infty}$, any aperiodic automorphism $T$ and every $\gamma \in\mathbb{R}$ there exists a partition $\mcp\in\mathfrak{B}$ such that $h(g,\mcp)\geq \gamma$. Since we omit the assumption of ergodicity we will use different techniques mainly based on the well-known Rokhlin Lemma which guarantees existence of so called Rokhlin towers of given height, covering sufficiently large part of $X$. Using such towers we will find lower estimations for $g$-entropy of a process similar to ones obtained  by Frank Blume in \cite{Blumenotpubl}, \cite{Blume97}, where he proposed, for a given sequence $(a_n)_{n=1}^{\infty}$ converging to infinity slower than $n$, a~construction of a~partition into two sets $\mcp$, for which $\lim\limits_{n\to\infty} H(\mcp_n)/a_n=\infty$. 

We will assume that we have an aperiodic system, i.e. system $(X,\Sigma,\mu,T)$ for which \[\mu\left(\{x\in X: \exists n\in\mathbb{N}\; T^nx=x\}\right)=0.\] 

If $M_0,\ldots,M_{n-1}\subset X$ are pairwise disjoint sets of equal measure, then $\tau=(M_0,M_1,\ldots,M_{n-1})$ is called  {\sf a tower}. 
If additionally $M_k=T^{-(n-k-1)} M_{n-1}$ for $k=1,\ldots,n-1$, then $\tau$ is called {\sf Rokhlin tower}.\footnote{It is also known as Rokhlin-Halmos or Rokhlin-Kakutani tower.}
By the same bold letter $\bm{\tau}$ we will denote the set $\bigcup_{k=0}^{n-1}M_k$. Obviously $\mu(\bm{\tau})=n\mu(M_{n-1})$. Integer $n$ is called {\sf the height } of tower $\tau$. Moreover for $i<j$ we define a subtower \[\tau_i^j:=\left(M_i,\ldots,M_j\right) \;\;\text{and}\;\;\bm{\tau}_i^j=\bigcup_{k=i}^j M_k. \] 
In aperiodic systems there exist Rokhlin towers of a given length and covering sufficiently large part of $X$:
\begin{lemma}[\cite{HeinemannSchmitt}] \label{Rohlin}
If $T$ is an aperiodic and surjective transformation of Lebesgue space $(X,\Sigma,\mu)$, then for every $\varepsilon>0$ and every integer $n\geq 2$ there exists a Rokhlin tower $\tau$ of height $n$ with $\mu(\bm{\tau})>1-\varepsilon$.
\end{lemma}


Our goal is to find a lower bound for the dynamical $g$-entropy of a given partition.
For this purpose we will use Rokhlin towers and we will calculate dynamical $g$-entropy with respect to a~given Rokhlin tower. This leads us to the following definition:
Let $\mcp$ be a finite partition of $X$ and  $F\in\Sigma$, then we define the (static) {\sf $g$-entropy of $\mcp$ restricted to $F$} as \[H_F (g,\mcp):=\sum_{B\in\mcp}g(\mu(B\cap F)).\]
\medbreak

The following lemma gives us estimation for $H(g,\mcp)$ from below by the value of $g$-entropy restricted to a subset of $X$. 

\begin{lemma} \label{lemmaH-Hzaw}
Let $g\in\mathcal{G}_0$. Let $\mcp$ be a finite partition such that there exists a set $E\in\mcp$ with $0<\mu(E)<1$. If $F\in\Sigma$, then
 \[H(g,\mcp)\geq H_{F}(g,\mcp)-\left|g_-'\left( 1/2\right)\right|-d_{\max},\]
where $d_{\max}:=\max\limits_{x,y\in[0,1]}|g(x)-g(y)|$.
\end{lemma}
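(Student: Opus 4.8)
The plan is to bound the difference $H(g,\mcp)-H_F(g,\mcp)=\sum_{B\in\mcp}\left[g(\mu(B))-g(\mu(B\cap F))\right]$ term by term, exploiting that $\mu(B\cap F)\le\mu(B)$ together with the concavity of $g$. First I would split the blocks of $\mcp$ according to whether $\mu(B)\le 1/2$ or $\mu(B)>1/2$. Since the measures $\mu(B)$ sum to one, at most one block can satisfy $\mu(B)>1/2$; this single block I would estimate crudely, using $\left|g(\mu(B))-g(\mu(B\cap F))\right|\le d_{\max}$, which contributes the term $-d_{\max}$.

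For the remaining blocks, those with $\mu(B)\le 1/2$, I would use the supergradient inequality for concave functions: writing $g_-'$ for the left derivative (finite at every interior point), concavity gives $g(\mu(B\cap F))\le g(\mu(B))+g_-'(\mu(B))\,(\mu(B\cap F)-\mu(B))$, hence $g(\mu(B))-g(\mu(B\cap F))\ge g_-'(\mu(B))\,(\mu(B)-\mu(B\cap F))$. Because $g$ is concave its left derivative is nonincreasing, so $g_-'(\mu(B))\ge g_-'(1/2)$ whenever $\mu(B)\le 1/2$; since $\mu(B)-\mu(B\cap F)\ge 0$, this yields the uniform lower bound $g(\mu(B))-g(\mu(B\cap F))\ge g_-'(1/2)\,(\mu(B)-\mu(B\cap F))$ for every small block.

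Summing over the small blocks, the total is at least $g_-'(1/2)\sum_{B}\left(\mu(B)-\mu(B\cap F)\right)$, and the sum of the increments $\mu(B)-\mu(B\cap F)$ lies in $[0,1]$ (it is at most $\sum_{B\in\mcp}\mu(B)=1$). A short case distinction on the sign of $g_-'(1/2)$ then shows this quantity is bounded below by $-\left|g_-'(1/2)\right|$: it is nonnegative when $g_-'(1/2)\ge 0$, and at least $g_-'(1/2)\cdot 1=-\left|g_-'(1/2)\right|$ when $g_-'(1/2)<0$. Adding the contribution $-d_{\max}$ of the at-most-one large block gives exactly the claimed inequality.

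I expect the only delicate points to be bookkeeping rather than substance: making sure the supergradient inequality is applied with a finite left derivative (legitimate since every block entering this estimate has $0<\mu(B)\le 1/2$, an interior point of $[0,1]$, while empty blocks contribute $0$), and handling the sign of $g_-'(1/2)$ correctly so that the single constant $\left|g_-'(1/2)\right|$ absorbs both possibilities. The hypothesis that some $E\in\mcp$ satisfies $0<\mu(E)<1$ guarantees that $\mcp$ is nontrivial; beyond this the estimate requires nothing more than concavity and the monotonicity of $g_-'$.
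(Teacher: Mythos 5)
Your proof is correct and takes essentially the same route as the paper's: split off the (at most one) block of measure exceeding $1/2$ and absorb its contribution into $-d_{\max}$, then bound the blocks of measure at most $1/2$ by a linear lower bound with slope $g_-'(1/2)$ coming from concavity, noting the total increment $\sum_B\bigl(\mu(B)-\mu(B\cap F)\bigr)$ lies in $[0,1]$. The only cosmetic difference is that you invoke the supergradient inequality directly where the paper phrases the same estimate via the mean value theorem; both reduce to the monotonicity of the left derivative of a concave function.
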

\begin{proof}
By the mean value theorem we have  
\[g(\mu(A))-g(\mu(A\cap F))=g_-'(x_0^A)\left(\mu(A)-\mu(A\cap F)\right),\]
for any set of measure smaller or equal to $1/2$, where $x_0^A\in (\mu(A\cap F),\mu(A))$. Concavity of $g$ implies
\[\sum_{\mu(A)\leq 1/2}\left(g(\mu(A))-g(\mu(A\cap F))\right)\geq g_-'(1/2)\sum_{\mu(A)\leq 1/2}\mu(A\backslash F)\geq -|g_-'(1/2)|.\]
Eventually \[H(g,\mcp)-H_F(g,\mcp)+d_{\max} \geq -|g_-'(1/2)| \]
which completes the proof.
\end{proof}

The following lemma will play important rule in the proof of the main theorem of this section

\begin{lemma} \label{lemgPnEnFn}
Let $n\in\mathbb{N}$, $E\in\Sigma$. Suppose that $g\in\G$ is nonnegative in  $[0,\alpha]$, where $\alpha$ is some positive number. Then there exist  $\delta>0$ and $s\in(0,\alpha)$ such that
\[\left|H(g,\mcp_n^{E})-H(g,\mcp_n^{F})\right|\leq 1+\frac 2sd_{\max}\]
for every $F\in\Sigma$ s.t. $\mu(E\triangle F)<\delta$ ($\triangle$ denotes the symmetric difference), where $d_{\max}:=\max\limits_{x,y\in[0,1]}|g(x)-g(y)|$.
\end{lemma}
\begin{proof} 
Nonnegativity of $g$ for $x\in[0,\alpha]$ and its concavity imply that there exists   $s\in(0,\alpha)$ such that $g$ is nondecreasing in $[0,s]$. Fix $n\in\mathbb{N}$ and $E\in\Sigma$. 
There exists  $\delta\in (0,s)$ such that
\begin{equation}\label{gdelta2n}
g(\delta)<2^{-n}.
\end{equation}
Let $F\in\Sigma$ be such that $\mu(E\triangle F)\leq \delta$. Then for every $A\in\mcp_n^E$ i~$B\in\mcp_n^F$ we have
\begin{equation} \label{muABATBEF}
|\mu(A)-\mu(B)|\leq \mu(A\triangle B) \leq \mu(E\triangle F)\leq \delta.
\end{equation}
It is easy to see that for $x\in[0,s]$ the monotonicity and subadditivity of $g$ implies that
\begin{equation} \label{gyxgygx}
|g(y)-g(x)|\leq g(|y-x|).
\end{equation}
Define $\mathcal{D}_s=\{i\in \{1,\ldots,m\} \;|\; \mu(A_i)<s \text{ i } \mu(B_i)<s\}$.  From (\ref{gdelta2n}), (\ref{muABATBEF}), (\ref{gyxgygx}) and monotonicity of  $g$~in $[0,s]$ we obtain
\begin{eqnarray}
|H(g,\mcp_n^E)-H(g,\mcp_n^F)|&\leq & \sum_{i\in\mathcal{D}_s}|g(\mu(A_i))-g(\mu(B_i))|+\frac 2sd_{\max}\nonumber\\
&\leq & \sum_{i\in\mathcal{D}_s} g(|\mu(A_i)-\mu(B_i)|)+\frac 2sd_{\max} \nonumber\\ &\leq &  \sum_{i\in\mathcal{D}_s} g(\mu(A_i\triangle B_i))+\frac 2sd_{\max} \nonumber \\ &\leq &  \sum_{i=1}^m g(\mu(A_i\triangle B_i))+\frac 2sd_{\max} \nonumber \\ &\leq &  2^ng(\delta)+\frac 2sd_{\max} \leq 1+\frac 2sd_{max}\nonumber 
\end{eqnarray}
\end{proof}
To find the lower bound for the $g$-entropy of a partition we will construct so called {\bf independent sets}. We construct this set in the following way:
Let ${\bm \tau}$ be a tower of  height $m$. We divide the highest level of this tower ($M_{m-1}$) into two sets of equal measure let say  $I^{(m-1)}$ and $M_{m-1}\backslash I^{(m-1)}$. Next we consider $T^{-1}I^{(m-1)}$ and $T^{-1}(M_{m-1}\backslash I^{(m-1)})$. We divide each of them into two sets of equal measure and obtain sets $I_1^{(m-2)}, I_2^{(m-2)}, I_3^{(m-2)}, I_4^{(m-2)}$ and define set $I^{(m-2)}$ as the algebraic sum of two of those sets -- one taken from $T^{-1}I^{(m-1)}$ and the other  taken from $T^{-1}(M_{m-1}\backslash I^{(m-1)})$. We repeat this construction until we achieve the lowest level $M_0$.  (see Fig. \ref{Zbiorniezalezny}). We define set $I$ as $I:=\bigcup\limits_{j=0}^{m-1}I^{(j)}$. Such a set is called an independent set in  ${\bm \tau}$.
\begin{figure}[ht]
    \centering
  \includegraphics[width=0.53\textwidth, angle=0]{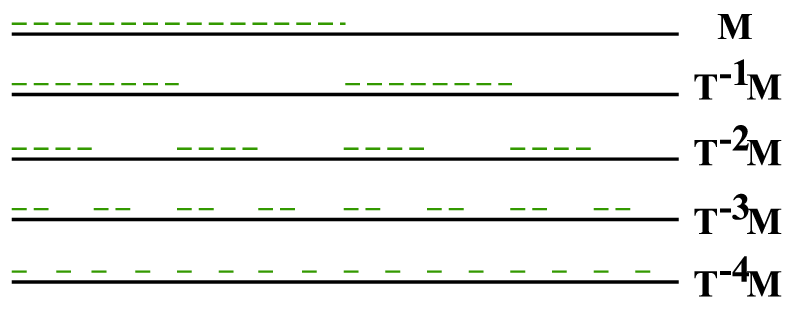}
\caption{ \label{Zbiorniezalezny}
Set $I$ (green dashes) in the tower  of height~5.}
\end{figure}

We can make this construction since every aperiodic system do not have atoms of positive measure and in every non-atomic Lebegue space for every measurable set $A$ and every $\alpha\in[0,\alpha]$ there exists $B\subset A$ such that $\mu(B)=\alpha$.

Now we give an estimation from below for the $g$-entropy restricted to a given Rokhlin tower. First, by $\mcp^I$ we will denote a partition into two sets $\{I,X\backslash I\}$, for a measurable set $I$. Then the following lemma is true.
\begin{lemma}\label{lemmaniezalwtau}
Let $\tau=\left(M,TM,\ldots,T^{2n-1}M\right)$ be Rokhlin tower of height $2n$, $I\in\Sigma$ be an independent set in $\tau$. If $g\in\G^{\infty}$ then
\[H_{\bm{\tau}_0^{n-1}}\left(g,\mcp_n^I\right)= \frac{\mu(\bm{\tau})}{2}\varphi\left(\frac{\mu(\bm{\tau})}{2^{n+1}}\right).\]
\end{lemma}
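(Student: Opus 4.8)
The plan is to show that, restricted to the lower half $\bm{\tau}_0^{n-1}$ of the tower, the partition $\mcp_n^I$ looks exactly like the cylinder partition of a Bernoulli $(1/2,1/2)$ scheme: each of its $2^n$ atoms meets $\bm{\tau}_0^{n-1}$ in a set of \emph{one and the same} measure $\mu(\bm{\tau})/2^{n+1}$. Granting this, the asserted identity becomes a one-line rewriting. First I would index the atoms. Writing $I^{(1)}:=I$ and $I^{(0)}:=X\setminus I$, the atoms of $\mcp_n^I=\bigvee_{k=0}^{n-1}T^{-k}\mcp^I$ are the sets
\[
B_w:=\bigcap_{k=0}^{n-1}T^{-k}I^{(w_k)},\qquad w=(w_0,\ldots,w_{n-1})\in\{0,1\}^n,
\]
and, since the levels $T^0M,\ldots,T^{n-1}M$ are pairwise disjoint, the whole computation reduces to a single term via
\[
\mu\bigl(B_w\cap\bm{\tau}_0^{n-1}\bigr)=\sum_{j=0}^{n-1}\mu\bigl(B_w\cap T^jM\bigr).
\]

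Next I would evaluate $\mu(B_w\cap T^jM)$ for $0\le j\le n-1$. As $T$ preserves $\mu$, applying $T^{-j}$ gives $\mu(B_w\cap T^jM)=\mu\bigl(M\cap\bigcap_{k=0}^{n-1}T^{-(j+k)}I^{(w_k)}\bigr)$. Setting $M_l^{(w)}:=T^{-l}(I^{(w)}\cap T^lM)=M\cap T^{-l}I^{(w)}$, each factor is an atom of the independent family, so $M\cap\bigcap_kT^{-(j+k)}I^{(w_k)}=\bigcap_{k=0}^{n-1}M_{j+k}^{(w_k)}$. Because $j,k\in\{0,\ldots,n-1\}$ the indices $j+k$ all lie in $\{0,\ldots,2n-2\}\subset\{0,\ldots,2n-1\}$, i.e. inside the range over which $\{M_l^{(1)},M_l^{(0)}\}_l$ is independent --- this is precisely what forces the tower to have height $2n$. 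Using the (mutual) independence together with the normalization $\mu(M_l^{(w)})=\mu(I^{(w)}\cap T^lM)=\mu(M)/2$ (valid for both $w=1$ by hypothesis and $w=0$ since $\mu(T^lM)=\mu(M)$), I obtain
\[
\mu\bigl(B_w\cap T^jM\bigr)=\frac{\prod_{k=0}^{n-1}\mu(M_{j+k}^{(w_k)})}{\mu(M)^{n-1}}=\frac{(\mu(M)/2)^n}{\mu(M)^{n-1}}=\frac{\mu(M)}{2^n},
\]
a value independent of both $w$ and $j$.

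Summing over $j$ then yields $\mu(B_w\cap\bm{\tau}_0^{n-1})=n\mu(M)/2^n$; since $\mu(\bm{\tau})=2n\mu(M)$ for a tower of height $2n$, this equals $\mu(\bm{\tau})/2^{n+1}$ for every word $w$. In particular every atom has positive measure, so $\mcp_n^I$ has exactly $2^n$ atoms and
\[
H_{\bm{\tau}_0^{n-1}}\bigl(g,\mcp_n^I\bigr)=2^n\,g\!\left(\frac{\mu(\bm{\tau})}{2^{n+1}}\right)=\frac{\mu(\bm{\tau})}{2}\cdot\frac{g(\mu(\bm{\tau})/2^{n+1})}{\mu(\bm{\tau})/2^{n+1}}=\frac{\mu(\bm{\tau})}{2}\,\varphi\!\left(\frac{\mu(\bm{\tau})}{2^{n+1}}\right),
\]
which is the claim.

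The only genuinely delicate point is the combinatorial bookkeeping of the middle step: correctly identifying the shifted factors $T^{-(j+k)}I^{(w_k)}$ with atoms $M_{j+k}^{(w_k)}$ of the independent family and verifying that the relevant indices never exceed $2n-1$ --- exactly the reason $\tau$ is taken of height $2n$ while only the bottom $n$ levels are used. I note that the hypothesis $g\in\G^{\infty}$ plays no role in the equality itself, which holds verbatim for every $g\in\G$; it is imposed because the lemma will be applied in the regime $\Cs(g)=\infty$, where $\varphi(\mu(\bm{\tau})/2^{n+1})$ blows up and produces the desired lower bound.
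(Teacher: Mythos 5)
Your proof is correct and takes essentially the same route as the paper's: both arguments reduce the identity to the fact that each of the $2^n$ atoms of $\mcp_n^I$ meets $\bm{\tau}_0^{n-1}$ in a set of the same measure $2^{-n}\mu(\bm{\tau}_0^{n-1})=\mu(\bm{\tau})/2^{n+1}$, after which the claim is the one-line rewriting $2^n g(x)=\tfrac{\mu(\bm{\tau})}{2}\varphi(x)$. The only difference is one of detail: the paper asserts the equal-measure fact directly from independence, whereas you verify it level by level, and your side remarks (that it is \emph{mutual} independence of the family that is used, and that the hypothesis $g\in\G^{\infty}$ is irrelevant to the equality itself) are both accurate.
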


\begin{proof}
Independence of $I$ in $\tau$ implies that the partition \[\mcp_n^I\cap \bm{\tau}_0^{n-1}\] is a partition of  $\bm{\tau}_0^{n-1}$ into $2^n$ sets of equal measure $2^{-n}\mu(\bm{\tau}_0^{n-1})$. Therefore 
\begin{eqnarray}
H_{\bm{\tau}_0^{n-1}}\left(g,\mcp_n^I\right) &=& \sum_{A\in\mcp_n^I} g\left(\mu(A\cap\bm{\tau}_0^{n-1})\right)\nonumber \\ &=& 2^n g\left(\frac{\mu(\bm{\tau}_0^{n-1})}{2^n}\right) = \mu\left(\bm{\tau}_0^{n-1}\right)\varphi\left(\frac{\mu(\bm{\tau}_0^{n-1})}{2^n}\right) \nonumber \\ &=&\frac{\mu(\bm{\tau})}{2}\varphi\left(\frac{\mu(\bm{\tau})}{2^{n+1}}\right). \nonumber
\end{eqnarray}

\end{proof}

\begin{theo}\label{hgpgeqM}
Let $g\in\G^{\infty}$ and $T$ be an aperiodic, surjective automorphism of a Lebesgue space $(X,\Sigma,\mu)$ and let $\gamma\in\mathbb{R}$. Then there exists a partition $\mcp\in\mathfrak{B}$ such that \[h(g,\mcp)\geq \gamma.\] 
\end{theo}

\begin{proof}
We will prove that
for any $\gamma>0$ there exists a partition $\mcp^E=\{E,X\backslash E\}$ such that $h(g,\mcp)\geq \gamma$. We define recursively a sequence of sets $E_n\in \Sigma $. Let  \[E_0:=\emptyset,\;\; N_0:=\delta_0:=1.\] Let $n>0$ and assume that we have already defined $E_{n-1}$, $N_{n-1}$ and $\delta_{n-1}$. Using Lemma \ref{lemgPnEnFn} we can choose  $\delta_n>0$ such that
\begin{equation}\label{deltan}
\delta_n<\frac 12 \delta_{n-1}
\end{equation}
\begin{equation} \label{HNnEnF}
\left|H\left(g,\mcp_{N_n}^{E_{n-1}}\right)-H\left(g,\mcp_{N_n}^{F}\right)\right| <1+\frac{2}{s}d_{\max}.
\end{equation}
for any $F\in\Sigma$, for which $\mu(E_{n-1}\triangle F)<2\delta_n$. 

Since \[\lim_{x\to 0^+}\frac{g(x)}{\eta(x)}=\infty,\] we can choose such  $N_n\in\mathbb{N}$ that 
\begin{equation} \label{eqnvarphi}
\frac{\varphi\left(\delta_n 2^{-N_n-1}\right)}{\varphi_{\eta}\left(\delta_n 2^{-N_n-1}\right)}>\frac{2\gamma}{\delta_n \log 2}.
\end{equation}

By Lemma \ref{Rohlin} there exists  $M_n\in \Sigma$, such that $\tau_n=\left(M_n,TM_n,\ldots,T^{2N_n-1}M_n\right)$ is a Rokhlin tower of measure $\mu(\bm{\tau}_n)=\delta_n$. Let  $I_n\subset \bm{\tau}_n$ be an independent set in $\tau_n$ and \[E_n:=\left(E_{n-1}\backslash \bm{\tau}_n\right)\cup I_n.\] Then   \[\mu(E_{n-1}\triangle E_n)\leq \mu(\bm{\tau}_n)=\delta_n.\] for all positive integers $n$. By (\ref{deltan}) we have $\delta_n<2^{-n}$ and we conclude that $\left(\bm{1}_{E_n}\right)_{n=0}^{\infty}$ is a Cauchy sequence in $L_1(X)$. Therefore there exist $E\in\Sigma$ such that $\bm{1}_{E_n}$ converges to $\bm{1}_E$. For this set we have \[\mu\left(E_n\triangle E\right)\leq \sum_{k=n+1}^{\infty}\mu\left(E_k\triangle E_{k-1}\right)\leq \sum_{k=n+1}^{\infty}\delta_k<2\delta_{n+1}.\]
Since $E_n\cap\tau_n =I_n$, applying (\ref{HNnEnF}) and Lemmas \ref{lemmaH-Hzaw}, \ref{lemmaniezalwtau} we obtain that for $N_n$ such that $\delta_n\cdot2^{-N_n-1}<s$:
\begin{eqnarray}
H(g,\mcp_{N_n}^E) &\geq & H(g,\mcp_{N_n}^{E_n})-1-\frac 2sd_{\max}\nonumber \\ &\geq &H_{\left({\bm\tau}_n\right)_{0}^{N_n-1}}\left(g,\mcp_{N_n}^{E_n}\right)-\left|g_-'\left( 1/2\right)\right|-\left(\frac 2s+1\right)d_{\max}-1\nonumber \\ &\geq &H_{\left({\bm\tau}_n\right)_{0}^{N_n-1}}\left(g,\mcp_{N_n}^{I_n}\right)-\left|g_-'\left( 1/2\right)\right|-\left(\frac 2s+1\right)d_{\max}-1\nonumber\\ &\geq & \left[\frac{\mu(\bm{\tau}_n)\ln 2}{2}\left(N_n+1\right)-\frac{\mu(\bm{\tau}_n)\ln\mu(\bm{\tau}_n)}{2}\right]\cdot\frac{\varphi\left(\mu(\bm{\tau}_n)2^{-N_n-1}\right)}{-\ln\left(\mu(\bm{\tau}_n)2^{-N_n-1}\right)}\nonumber \\
&& -\left|g_-'\left( 1/2\right)\right|-\left(\frac 2s+1\right)d_{\max}-1 \nonumber\\
&\geq & \frac{\ln 2}{2}\cdot \delta_n\cdot \left(N_n+1\right)\cdot\frac{\varphi\left(\delta_n 2^{-N_n-1}\right)}{\varphi_{\eta}\left(\delta_n 2^{-N_n-1}\right)}-\left|g_-'\left(1/2\right)\right|\nonumber \\ && -\left(\frac 2s+1\right)d_{\max}-1. \nonumber
\end{eqnarray}
From (\ref{eqnvarphi}) we obtain that
\begin{eqnarray}
\lim_{n\to\infty}\frac{H(g,\mcp_{N_n}^E)}{N_n}&\geq &  \frac{\ln 2}{2} \lim_{n\to\infty} \delta_n\cdot \frac{N_n+1}{N_n}\cdot\frac{\varphi\left(\delta_n2^{-N_n-1}\right)}{\varphi_{\eta}\left(\delta_n2^{-N_n-1}\right)} \geq  \gamma. \nonumber
\end{eqnarray}
\end{proof}

\subsection{Bernoulli shifts.}
Let $\mathcal{A}=\{1,\ldots,k\}$ be a finite alphabet. Let $X=\{x=\{x_i\}_{i=-\infty}^{\infty}\colon x_i\in\mathcal{A}\}$ and  $\sigma$ be a left shift \[\sigma(x)_i=x_{i+1}.\] For any $s\leq t$ and block $[\omega_0,\ldots,\omega_{t-s}]$ with $a_i\in\mathcal{A}$ we define a cylinder \[C_s^t(\omega_0,\ldots,\omega_{t-s})=\{x\in X: x_i=\omega_{i-s}\;\text{for}\; i=s,\ldots,t\}.\] We consider the Borel $\sigma$-algebra with respect to the metric, which is given by $d(x,y)=2^{-N}$, where $N=\min\{|i|:x_i\neq y_i\}$. One can show that Borel $\sigma$-algebra is the minimal $\sigma$-algebra containing all cylindrical sets. Let $p=\left(p_1,\ldots,p_k\right)$ be a probability vector, i.e. $p_i\geq 0$ for any $i$ and $\Sigma p_i=1$. We define a measure $\rho=\rho (p)$ on $\mathcal{A}$ by setting $\rho(\{i\})=p_i$. Then $\mu_p$ is a corresponding product measure on $X=\mathcal{A}^{\mathbb{Z}}$.
Thus, the static  $g$-entropy of a partition  $\mcp^{\mathcal{A}}=\{[1],[2],\ldots,[k]\}$ is equal to
\[H_{\mu_p}\left(g,\mcp_n^{\mathcal{A}}\right)=\sum_{\omega\in\mathcal{A}^n}g\left(\mu(C_0^{n-1}(\omega_0,\ldots,\omega_{n-1}))\right)=\sum_{\omega\in\mathcal{A}^n} g\left(p_{\omega_0}\cdots p_{\omega_{n-1}}\right),\]
where $\omega=(\omega_0,\ldots,\omega_{n-1})$. By the concavity of the function $g$ we have
\[H_{\mu_p}(g,\mcp_n^{\mathcal{A}})\leq \varphi\left(\frac{1}{k^n}\right)\]
where  equality holds only when  $p=p^*=\left(\frac 1k,\ldots, \frac 1k\right)$. 
Before calculating the dynamical $g$-entropy of the partition $\mcp^{\mathcal{A}}$ with respect to measure $\mu_{p^*}$, we give the following lemma, which proof will be given later:
\begin{lemma} \label{limsupMn}
If $g\in\G$, then \[\Cs(g)=\limsup_{n\to\infty}\frac{g(\kappa^{-n})}{\eta(\kappa^{-n})}\;\;\text{and}\;\; \Ci(g)=\liminf_{n\to\infty}\frac{g(\kappa^{-n})}{\eta(\kappa^{-n})}\] for any   $\kappa>1$.
\end{lemma}

Therefore, applying Lemma \ref{limsupMn} for the partition $\mcp^{\mathcal{A}}$ and $\kappa=k$ we obtain
\begin{equation} \label{entrBernoulli} h_{\mu_{p^*}}\left(g,\mcp^{\mathcal{A}}\right)=\limsup_{n\to\infty}\frac 1n \varphi\left(\frac{1}{k^n}\right)=\left\{\begin{array}{ll} \Cs(g) \cdot \log k, & \;\; \text{if}\;\; \Cs(g)<\infty;\\ \infty, &\;\;\text{otherwise.}\end{array}\right.\end{equation}

\begin{remark} 
If we consider lower limit instead of the upper limit we would obtain
\[ \liminf_{n\to\infty}\frac 1n \varphi\left(\frac{1}{k^n}\right)=\left\{\begin{array}{ll} \Ci(g)\cdot\log k, & \;\; \text{if}\;\; \Ci(g)<\infty;\\ \infty, &\;\;\text{otherwise.}\end{array}\right.\]
Therefore we can not replace an upper limit by the limit in the definition of the dynamical $g$-entropy.
\end{remark}

\begin{proof}[Proof of Lemma \ref{limsupMn}] We will show the equality for the upper limit. Proof of the equality for the lower limit is similar.
Let $(x_n)_{n=1}^{\infty}$ and $(m_n)_{n=1}^{\infty}$ be such that $\limsup\limits_{n\to\infty}g(x_n)/\eta(x_n)=c$ and $x_n\in\left(\kappa^{-m_n},\kappa^{-m_n+1}\right)$ for every $n\in\mathbb{N}$. Then $-\log x_n \geq -\log \kappa^{-m_n+1}$.  Every function  $g\in \G$ is quasihomogenic, so for every positive integer $n$ occurs \[ \label{quasigxn}\frac{g(x_n)}{x_n}<\frac{g(\kappa^{-m_n})}{\kappa^{-m_n}}.\]

Therefore
\begin{eqnarray}
\frac{g(x_n)}{\eta(x_n)}&=&\frac{g(x_n)}{x_n}\frac{1}{-\log x_n}\leq \frac{g(\kappa^{-m_n})}{\kappa^{-m_n}}\frac{1}{(m_n-1)\log \kappa} \nonumber \\ &=& \frac{g(\kappa^{-m_n})}{\eta\left(\kappa^{-m_n}\right)}\cdot\frac{m_n}{m_n-1},\nonumber
\end{eqnarray}
and
 \[\limsup_{x\to 0^+}\frac{g(x)}{\eta(x)}=\limsup_{n\to\infty}\frac{g(\kappa^{-n})}{\eta(\kappa^{-n})}.\]
\end{proof}

\section{Kolmogorov-Sinai entropy like invariant}

The basic tool in the ergodic theory is Kolmogorov-Sinai entropy defined as a~supremum of Shannon dynamical entropies over all finite partitions: \[h_{\mu}(T)=\sup_{\mcp\;-\;\text{finite}}h(T,\mcp).\]
It is invariant under metric isomorphism. Following the  Kolmogorov proposition we take the supremum over all partitions of dynamical $g$-entropy of a partition. For a given system $(X,\Sigma,\mu,T)$ we define
\begin{equation}\label{hgKS}
h_{\mu}(g,T)=\sup_{\mcp\;-\;\text{finite}}h(g,T,\mcp)
\end{equation}
and call it {\sf the measure-theoretic $g$-entropy  of transformation $T$ with respect to measure $\mu$}.

It is easy to see that it is an isomorphism invariant.  Ornstein and Weiss \cite{OrnsteinWeiss} showed the striking result that measure-theoretic entropy is the only finitely observable invariant for the class of all ergodic processes. More precisely -- every finitely observable invariant for a class of all ergodic processes is a continuous function of entropy. Of course in the case of $g\in\G^0\cup\G^{\sh}$ by Corollary \ref{Cgfinite} we have \[h_{\mu}(g,T)=\lim_{x\to 0^+}\frac{g(x)}{\eta(x)}\cdot h_{\mu}(T).\]
  We will show that for a wider class of functions, namely for functions for which \[\Cs(g)=\limsup_{x\to 0^+}\frac{g(x)}{\eta(x)}<\infty\] we have \[h_{\mu}(g,T)=\Cs(g)\cdot  h_{\mu}(T)\]
for any ergodic transformation $T$. This shows that the measure-theoretic $g$-entropy is in fact finitely observable: one might simply compose the entropy estimators \cite{Weiss} with the linear function itself. Our proof will be similar to the proof of \cite[Thm 1.1]{TakensVerb1998} where Takens and Verbitski showed that for ergodic transformations supremum over all finite partitions of dynamical R\'{e}nyi entropies of order $\alpha>1$ are equal to the measure-theoretic entropy of $T$ with respect to measure $\mu$.

Let us introduce necessary definitions.
Let  $T_i$ be automorphisms of Lebesgue space $(X_i,\Sigma_i,\mu_i)$ for $i=1,2$ respectively. Then we say that  $T_2$ is a  {\sf factor} of transformation $T_1$, if there exists a homomorphism $\phi\colon X_1\mapsto X_2$ such that \[\phi T_1=T_2\phi \;\; \mu_1 \;\text{a.e. on}\;\; X_1.\]
Suppose that $T_2$ is a factor of $T_1$ under homomorphism $\phi$. Then for an arbitrary finite partition $\mcp$ of $X_2$ we have 
\[H\left(g,\bigvee_{i=0}^{k-1}T_2^{-i}\mcp\right)=H\left(g,\bigvee_{i=0}^{k-1}\phi^{-1}T_2^{-i}\mcp\right)=H\left(g,\bigvee_{i=0}^{k-1}T_1^{-i}\phi^{-1}\mcp\right).\]
Hence $h(g,T_2,\mcp)=h(g,T_1,\phi^{-1}\mcp)$. Therefore \[h_{\mu}(g,T_2)=\sup_{\mcp - \text{finite}}h(g,T_2,\mcp)=\sup_{\mcp - \text{finite}} h(g,T_1,\phi^{-1}\mcp)\leq h(g,T_1).\]
This implies the following proposition:
\begin{prop} \label{factorhmu}
If $T_2$ is a factor of $T_1$, then for every function $g\in\G$  \[h_{\mu}(g,T_2)\leq h_{\mu}(g,T_1).\]
\end{prop}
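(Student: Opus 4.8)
The plan is to show that the chain of computations sketched in the paragraph preceding the statement actually closes up into the stated inequality, so the proof amounts to verifying each link carefully. First I would fix a finite partition $\mcp$ of $X_2$ and unwind the defining quantities. The key observation is that if $\phi\colon X_1\to X_2$ is the factor homomorphism, then $\phi^{-1}$ respects both the Boolean operations on partitions and the measures of atoms: since $\phi$ is measure-preserving, $\mu_1(\phi^{-1}A)=\mu_2(A)$ for every $A\in\Sigma_2$, and since $\phi T_1=T_2\phi$ holds $\mu_1$-a.e., we have $\phi^{-1}T_2^{-i}\mcp=T_1^{-i}\phi^{-1}\mcp$ up to null sets. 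These two facts together give the displayed identity
\[
H\left(g,\bigvee_{i=0}^{k-1}T_2^{-i}\mcp\right)=H\left(g,\bigvee_{i=0}^{k-1}T_1^{-i}\phi^{-1}\mcp\right),
\]
because the atoms of the pulled-back join are exactly the $\phi$-preimages of the atoms of the original join, and $g$ is applied to the same list of measure values on both sides.

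Next I would pass from the static $g$-entropies to the dynamical ones. Dividing by $k$ and taking $\limsup_{k\to\infty}$ on both sides of the atom-by-atom identity yields $h(g,T_2,\mcp)=h(g,T_1,\phi^{-1}\mcp)$ directly from the definition \eqref{dynentr}; no interchange of limit and supremum is needed here, since we are comparing two fixed partitions. The only mild subtlety is to note that $\phi^{-1}\mcp$ is genuinely a finite measurable partition of $X_1$, which is immediate since $\phi$ is measurable and $\mcp$ is finite. I would then take the supremum over all finite partitions $\mcp$ of $X_2$. The map $\mcp\mapsto\phi^{-1}\mcp$ sends finite partitions of $X_2$ into (a subfamily of) finite partitions of $X_1$, so
\[
h_{\mu}(g,T_2)=\sup_{\mcp}h(g,T_2,\mcp)=\sup_{\mcp}h(g,T_1,\phi^{-1}\mcp)\leq\sup_{\mcq}h(g,T_1,\mcq)=h_{\mu}(g,T_1),
\]
where the inequality is simply the fact that a supremum over a subfamily cannot exceed the supremum over the whole family.

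I do not expect a serious obstacle here, since the argument is the standard monotonicity-under-factors argument familiar from Kolmogorov–Sinai entropy, and it works verbatim because the $g$-entropy of a partition depends only on the multiset of atom measures, which $\phi^{-1}$ preserves. The one place demanding a little care is the a.e. equivariance $\phi T_1=T_2\phi$: strictly speaking the equality $\phi^{-1}T_2^{-i}\mcp=T_1^{-i}\phi^{-1}\mcp$ holds only up to $\mu_1$-null sets, so I would remark that modifying a partition on a null set does not change any $g$-entropy (as $g(0)=0$ and the measures of atoms are unaffected), which legitimizes replacing the two partitions interchangeably inside $H(g,\cdot)$. With that remark in place, every step above is an equality or an elementary supremum comparison, and the proposition follows.
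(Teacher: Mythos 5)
Your proof is correct and follows essentially the same route as the paper, which establishes the proposition in the paragraph preceding its statement: pull back partitions via $\phi^{-1}$, note the equality of static $g$-entropies (since $\phi$ preserves measure and intertwines the dynamics), deduce $h(g,T_2,\mcp)=h(g,T_1,\phi^{-1}\mcp)$, and conclude by comparing suprema. Your added remarks on null sets and the measurability of $\phi^{-1}\mcp$ are sound refinements of the same argument rather than a different approach.
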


\subsection{Measure-theoretic $g$-entropies for Bernoulli automorphisms.}
An automorphism $T$ on $(X,\Sigma,\mu)$ is called {\sf Bernoulli automorphism} if it is isomorphic to some Bernoulli shift. The crucial role in the proof of the main theorem of this section (Theorem \ref{twhKSg}) will play a well-known theorem due to Sinai:
\begin{theo}[Sinai,  \cite{Sinaj}] \label{Sinaj}
Let  $T$ be an arbitrary ergodic automorphism of some Lebesgue space $(X,\Sigma,\mu)$. Then each Bernoulli automorphism  with $h_{\mu}(T_1)\leq h_{\mu}(T)$ is a factor of the automorphism  $T$.
\end{theo}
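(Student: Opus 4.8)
The plan is to realize the given Bernoulli automorphism $T_1$ as the factor induced by a single well-chosen finite partition of $X$. Say $T_1$ is (isomorphic to) the Bernoulli shift on $\mathcal{A}^{\mathbb{Z}}$ with independent marginal $p$ and entropy $H=h_\mu(T_1)$. I would seek a finite partition $Q=\{Q_a\}_{a\in\mathcal{A}}$ of $X$ such that the $T$-process generated by $Q$ is isomorphic to $T_1$; the associated name map, sending $x$ to the sequence of $Q$-labels along its $T$-orbit, is then the sought homomorphism $\phi$ intertwining $T$ and $T_1$. Thus the whole problem reduces to producing a partition whose process equals the target Bernoulli process exactly.

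To construct such a $Q$ I would proceed by successive approximation in Ornstein's $\bar d$-metric on stationary processes, the engine being a copying (``painting'') lemma: for each $\varepsilon>0$ there is a finite partition $P$ of $X$ whose $T$-process lies within $\varepsilon$ of the Bernoulli process. This is exactly where the hypothesis $H\le h_\mu(T)$ is used. Using Rokhlin's lemma I would build a tall tower of some height $N$ covering all but a negligible part of $X$; the Shannon--McMillan--Breiman theorem then guarantees that the $T$-names of length $N$ of non-negligible measure number roughly $e^{N h_\mu(T)}$, which dominates the roughly $e^{NH}$ typical Bernoulli $N$-blocks. A measure-respecting assignment of Bernoulli blocks to tower columns (a Hall-type matching) lets me paint symbols of $\mathcal{A}$ on the tower so that the empirical short-block statistics of the resulting partition agree with those of $p$ to within $\varepsilon$, giving the desired $\bar d$-approximation.

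With the copying lemma available I would iterate it with $\varepsilon_k\downarrow 0$, at each stage altering the previous partition only on a set of small measure, so that the sequence $(P_k)$ is Cauchy in $\bar d$ (equivalently, convergent in the symmetric-difference sense). The limit partition $Q$ then exists, and its $T$-process is the $\bar d$-limit of processes that match the Bernoulli one in entropy and in finite-dimensional distributions up to order $\varepsilon_k$. Since Bernoulli shifts are finitely determined---any process that is a $\bar d$-limit of processes matching a fixed Bernoulli shift's entropy and marginals must coincide with that Bernoulli shift---the limiting process is precisely $T_1$, so $Q$ induces the required factor.

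I expect the main obstacle to be the copying lemma together with the identification of the limit as genuinely Bernoulli rather than merely $\bar d$-close. The delicate points are arranging the combinatorial matching so that the painted names are simultaneously distributed like $p$ and measurable with respect to the $T$-dynamics (importing no outside randomness), controlling the error accumulated over infinitely many modifications so that $(P_k)$ stays Cauchy, and finally invoking finite determinism. The tightest situation is the boundary case $h_\mu(T_1)=h_\mu(T)$, where there is no entropy slack and the factor must carry the full entropy of $T$; establishing finite determinism of Bernoulli shifts, which underlies this last step, is itself the technical heart of the argument, after which the passage to the limit and the identification of $\phi$ are comparatively routine.
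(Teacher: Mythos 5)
The paper does not prove this statement at all: it is Sinai's weak-isomorphism (factor) theorem, imported verbatim from \cite{Sinaj} and used as a black box in the proof of Proposition \ref{erghmulogm}, so there is no internal argument to compare yours against. Judged on its own, your outline is the standard Ornstein-theoretic route to Sinai's theorem (name maps, copying over Rokhlin towers via Shannon--McMillan--Breiman and a marriage-type matching, successive approximation, finite determinism of Bernoulli processes); note this is also genuinely different from Sinai's original 1962 argument, which predates the $\bar{d}$-metric and the notion of finitely determined processes.

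As a proof, however, your sketch has one structural gap and two imprecisions. The gap: the plain copying lemma gives no control whatsoever on how much the previous partition must be altered to improve the approximation, so your claim that one can iterate ``altering the previous partition only on a set of small measure'' is unsupported --- that control is precisely Ornstein's fundamental lemma, and its proof invokes finite determinism at \emph{every} stage; without it the sequence $(P_k)$ need not be Cauchy and the limit partition $Q$ need not exist. You have the role of FD inverted: the final identification is actually the soft step, since if $P_k\to Q$ in the partition pseudometric and the finite-dimensional distributions of the $P_k$-processes converge to those of $p$, then the $Q$-process has exactly the Bernoulli distributions, with no appeal to FD. The imprecisions: first, matching empirical block statistics can never by itself yield a $\bar{d}$-estimate (periodic processes approximate Bernoulli marginals arbitrarily well weakly, yet remain $\bar{d}$-far, because entropy is $\bar{d}$-continuous); what the painting must deliver is marginal closeness \emph{together with} an entropy lower bound, the latter coming from near-injectivity of the block-to-column matching, and FD is what converts this pair into a $\bar{d}$-bound. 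Second, ``$T$-names of length $N$'' are undefined for an abstract automorphism: you must first fix an auxiliary finite partition $R$ with $h(T,R)\geq h_{\mu}(T_1)-\delta$ (possible since $h_{\mu}(T)$ is a supremum over partitions and dominates $h_{\mu}(T_1)$), build the tower columns from $R$-names, and paint measurably over the $R$-process so that no outside randomness is imported. With these repairs --- that is, granting Ornstein's fundamental lemma and the finite determinism of Bernoulli shifts, which you correctly identify as the technical heart --- your plan is the correct and standard one.
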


The following proposition will play a crucial role in our considerations:

\begin{prop} \label{erghmulogm}
Let $T$ be an arbitrary ergodic automorphism with $h_{\mu}(T)\geq \log M$ for some integer $M\geq 2$. Then for every $g\in\G$  \[h_{\mu}(g,T)\geq \Cs(g)\cdot\log M.\] 
\end{prop}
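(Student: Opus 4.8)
The plan is to reduce the statement to a computation on Bernoulli shifts, where we already know the exact value of the dynamical $g$-entropy, and then push this down via Sinai's factor theorem. The key observation is that by equation (\ref{entrBernoulli}), for the full Bernoulli shift on an alphabet of size $M$ with the equidistributed measure $p^* = (1/M, \ldots, 1/M)$, the zero-coordinate partition $\mcp^{p^*}$ satisfies $h(g, \mcp^{p^*}) = \Cs(g)\cdot \log M$ whenever $\Cs(g)<\infty$ (and the quantity is $\infty$ otherwise, in which case the inequality is trivial). Since $\Cs(g)$ is by definition a $\limsup$, this identity is precisely what the upper-limit form of the dynamical $g$-entropy is designed to capture. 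So the Bernoulli shift $T_1$ over $M$ symbols with the uniform Bernoulli measure has $h_\mu(g,T_1)\geq h(g,\mcp^{p^*}) = \Cs(g)\cdot\log M$.

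Next I would invoke Sinai's theorem (Theorem \ref{Sinaj}). The Kolmogorov--Sinai entropy of this uniform Bernoulli shift on $M$ symbols is exactly $\log M$, and by hypothesis $h_\mu(T)\geq \log M$. Hence by Sinai's theorem this Bernoulli automorphism $T_1$ is a factor of the given ergodic automorphism $T$. Now Proposition \ref{factorhmu} applies: since $T_1$ is a factor of $T$, we have $h_\mu(g,T_1)\leq h_\mu(g,T)$ for every $g\in\G$. Chaining the inequalities gives
\[
h_\mu(g,T)\;\geq\; h_\mu(g,T_1)\;\geq\; h(g,\mcp^{p^*})\;=\;\Cs(g)\cdot\log M,
\]
which is exactly the assertion.

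The main obstacle—really the only point requiring care—is making sure the factor theorem can be applied with the correct entropy bookkeeping, namely that the relevant Bernoulli shift indeed has Kolmogorov--Sinai entropy precisely $\log M$ so that the hypothesis $h_\mu(T)\geq \log M$ licenses the use of Theorem \ref{Sinaj}. One must also handle the degenerate case $\Cs(g)=\infty$ separately, but there the right-hand side is $+\infty$ only if one interprets it so, and in fact $h(g,\mcp^{p^*})=\infty$ already by (\ref{entrBernoulli}), so the bound $h_\mu(g,T)\geq h_\mu(g,T_1)=\infty$ holds a fortiori. Everything else is a direct concatenation of results established earlier in the excerpt—equation (\ref{entrBernoulli}) for the Bernoulli value, Sinai's theorem for the factor structure, and Proposition \ref{factorhmu} for monotonicity of $g$-entropy under factors—so no genuinely new estimate is needed here; the proposition is essentially a bridge lemma connecting the Bernoulli computation to arbitrary ergodic systems.
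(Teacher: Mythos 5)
Your proof is correct and takes essentially the same route as the paper's: both consider the uniform Bernoulli shift on $M$ symbols, identify its dynamical $g$-entropy as $\Cs(g)\cdot\log M$ via equation (\ref{entrBernoulli}) (which rests on Lemma \ref{limsupMn}), realize this shift as a factor of $T$ by Sinai's Theorem \ref{Sinaj}, and conclude by the factor monotonicity of Proposition \ref{factorhmu}. The only cosmetic difference is that the paper unwinds the $\limsup$ and invokes Lemma \ref{limsupMn} explicitly, whereas you cite (\ref{entrBernoulli}) directly and additionally spell out the (trivial) case $\Cs(g)=\infty$.
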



\begin{proof}
Consider a shift $\sigma$ over all infinite sequences from the alphabet $\mathcal{A}=\{0,1\ldots,M-1\}$ with the corresponding Bernoulli measure generated by $p_1=\ldots = p_M=\frac 1M$. It is easy to see that $h_{\mu}(\sigma)=\log M$. From Theorem  \ref{Sinaj} we conclude that $\sigma$ is a factor of $T$. Therefore applying formula  (\ref{entrBernoulli}) we obtain \[h_{\mu}(g,T)\geq h_{\mu}(g,\sigma)\geq h(g,\sigma,\mcp^{\mathcal{A}})=\limsup_{n\to\infty}\frac 1n \varphi\left(M^{-n}\right)=\log M \cdot \limsup_{n\to\infty}\frac{\varphi\left(M^{-n}\right)}{\varphi_{\eta}\left(M^{-n}\right)}.\]
Applying Lemma \ref{limsupMn} completes the proof.
\end{proof}

\subsection{Main theorem}
Our goal in this section is the following result:
\begin{theo}\label{twhKSg}
Let $T$ be an ergodic automorphism of Lebesgue space $(X,\Sigma,\mu)$, and   $g\in\G$ be such  that $ \Cs(g) \in(0,\infty)$ Then \[h_{\mu}(g,T)=\left\{\begin{array}{ll}\Cs(g)\cdot h_{\mu}(T),& \; \text{if}\; h_{\mu}(T)<\infty,\\ \infty,& \; \text{otherwise}.\end{array}\right.\]
 If $g\in\G^0$, then $h_{\mu}(g,T)=0$. If $g\in\G$ is such that $\Cs(g)=\infty$ and  $T$ has positive measure-theoretic entropy, then $h_{\mu}(g,T)=\infty$.
\end{theo}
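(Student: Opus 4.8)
The plan is to split the equality $h_{\mu}(g,T)=\Cs(g)\,h_{\mu}(T)$ into the two inequalities. The upper bound is a routine consequence of the partition-wise comparison already proved, while the lower bound is the delicate part: uniform Bernoulli factors, which are the only objects for which we can compute the $g$-entropy exactly (formula (\ref{entrBernoulli})), only realise the discrete entropy values $\log M$, $M\in\mathbb{N}$, so a single application of Proposition \ref{erghmulogm} leaves a gap $h_{\mu}(T)-\log\lfloor e^{h_{\mu}(T)}\rfloor$ that must be closed. My main device for closing it is to pass to powers of $T$.

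For the upper bound I fix a finite partition $\mcp$ and apply Theorem \ref{hgp} with $g_1=g$, $g_2=\eta$: since $\Cs(g)<\infty$ and $h(T,\mcp)\le\log\card\mcp<\infty$, this gives $h(g,T,\mcp)\le\Cs(g)\,h(T,\mcp)\le\Cs(g)\,h_{\mu}(T)$, and taking the supremum over $\mcp$ yields $h_{\mu}(g,T)\le\Cs(g)\,h_{\mu}(T)$ whenever $h_{\mu}(T)<\infty$. The auxiliary fact feeding the lower bound is a comparison between $T$ and $T^{N}$. First, subadditivity of $g$ makes $H(g,\cdot)$, and hence $h(g,T,\cdot)$, monotone under refinement. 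Writing $\mcp^{(N)}:=\bigvee_{i=0}^{N-1}T^{-i}\mcp$, the $m$-th $T^{N}$-refinement $\bigvee_{k=0}^{m-1}T^{-Nk}\mcp^{(N)}$ equals $\mcp_{Nm}$, so $h(g,T^{N},\mcp^{(N)})=N\limsup_{m}\tfrac{1}{Nm}H(g,\mcp_{Nm})\le N\,h(g,T,\mcp)$; combining this with refinement monotonicity gives $h_{\mu}(g,T^{N})\le N\,h_{\mu}(g,T)$, i.e. $h_{\mu}(g,T)\ge\tfrac1N h_{\mu}(g,T^{N})$.

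The core step is then an asymptotically sharp lower bound for $h_{\mu}(g,T^{N})$. Although $T^{N}$ need not be ergodic, for ergodic $T$ it has finitely many ergodic components $X_{0},\dots,X_{d-1}$ with $d\mid N$, cyclically permuted by $T$, each of measure $1/d$ and, by Abramov's formula, each carrying $T^{N}$-entropy $N\,h_{\mu}(T)$. Fixing an integer $M\le e^{N h_{\mu}(T)}$ and applying Sinai's Theorem \ref{Sinaj} to the ergodic system $(X_{0},T^{N})$, I obtain a uniform Bernoulli factor on $M$ symbols; pulling back its zero-coordinate partition to $X_{0}$ and symmetrising it over the orbit $X_{0},TX_{0},\dots,T^{d-1}X_{0}$ produces a partition $\mcp$ of $X$ whose $T^{N}$-refinements $\mcp_{n}$ consist of exactly $M^{n}$ atoms, each of measure $M^{-n}$. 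Formula (\ref{entrBernoulli}) then gives $h(g,T^{N},\mcp)=\Cs(g)\log M$, so $h_{\mu}(g,T^{N})\ge\Cs(g)\log\lfloor e^{N h_{\mu}(T)}\rfloor$. Feeding this into the power comparison, $h_{\mu}(g,T)\ge\tfrac{\Cs(g)}{N}\log\lfloor e^{N h_{\mu}(T)}\rfloor$, and letting $N\to\infty$ (using $\tfrac1N\log\lfloor e^{N h_{\mu}(T)}\rfloor\to h_{\mu}(T)$ for $h_{\mu}(T)\in(0,\infty)$) gives $h_{\mu}(g,T)\ge\Cs(g)\,h_{\mu}(T)$, matching the upper bound.

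The remaining cases follow by specialisation. If $h_{\mu}(T)=\infty$, Proposition \ref{erghmulogm} gives $h_{\mu}(g,T)\ge\Cs(g)\log M$ for every $M$, hence $h_{\mu}(g,T)=\infty$. If $g\in\G^{0}$ then $\C(g)=0$, and Corollary \ref{Cgfinite} forces $h(g,T,\mcp)=0$ for every $\mcp$, so $h_{\mu}(g,T)=0$. If $\Cs(g)=\infty$ and $h_{\mu}(T)>0$, I rerun the power construction: choosing $N$ large enough that $e^{N h_{\mu}(T)}\ge2$ and taking $M=2$, formula (\ref{entrBernoulli}) yields $h(g,T^{N},\mcp)=\infty$, whence $h_{\mu}(g,T)\ge\tfrac1N h_{\mu}(g,T^{N})=\infty$. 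I expect the main obstacle to be precisely the passage to powers: one must control the non-ergodicity of $T^{N}$ through its ergodic components, confirm via Abramov's formula that each component has entropy $N\,h_{\mu}(T)$, and verify that the symmetrised partition is genuinely uniform so that (\ref{entrBernoulli}) applies exactly; the rest is bookkeeping around the two elementary inequalities.
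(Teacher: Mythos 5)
Your proposal is correct, and while it shares the paper's skeleton (Theorem \ref{hgp} for the upper bound; Sinai's Theorem \ref{Sinaj} together with the uniform-Bernoulli computation (\ref{entrBernoulli})/Lemma \ref{limsupMn} for the lower bound; identical handling of the cases $g\in\G^0$, $h_{\mu}(T)=\infty$ and $\Cs(g)=\infty$), it resolves the central difficulty --- that passing to powers destroys ergodicity --- by a genuinely different route.

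The paper goes \emph{factor-first}: by Theorem \ref{Sinaj} it replaces the ergodic $T$ with a full-entropy Bernoulli factor $T'$; Bernoulli automorphisms are mixing, so every power of $T'$ is ergodic, and the totally ergodic case (Lemma \ref{calkergcg}) applies to $T'$; that lemma is proved by contradiction, since a positive deficit $\Cs(g)h_{\mu}(T')-h_{\mu}(g,T')$ would scale at least linearly under powers (Lemma \ref{lemma73} together with $h_{\mu}(T^m)=mh_{\mu}(T)$), eventually allowing an integer $M$ to be inserted so as to contradict Proposition \ref{erghmulogm}; Proposition \ref{factorhmu} then carries the equality back to $T$. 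You go \emph{power-first}: you keep $T$ itself, pass to $T^N$, and tame its non-ergodicity through the cyclic structure of its ergodic components ($d\mid N$ components of measure $1/d$ permuted by $T$), Abramov's formula (equivalently, affinity of entropy over the ergodic decomposition) showing each component carries $T^N$-entropy $Nh_{\mu}(T)$, Sinai's theorem applied on one component, and symmetrization of the pulled-back uniform partition over the cyclic orbit; this yields $h_{\mu}(g,T^N)\ge\Cs(g)\log\lfloor e^{Nh_{\mu}(T)}\rfloor$, and the matching lower bound follows by a direct limit in $N$ rather than by contradiction. The verification you flag does go through: writing $P_i:=\bigcup_{j=0}^{d-1}T^jA_i$ for the pulled-back atoms $A_1,\dots,A_M\subset X_0$, the sets $T^jX_0$ are pairwise disjoint and $T^N$-invariant, so every atom of $\bigvee_{k=0}^{n-1}T^{-Nk}\{P_1,\dots,P_M\}$ is the disjoint union over $j$ of the sets $T^j\bigl(\bigcap_{k=0}^{n-1}T^{-Nk}A_{i_k}\bigr)$, each of measure $M^{-n}/d$, hence has measure exactly $M^{-n}$, and Lemma \ref{limsupMn} applies verbatim; in effect you prove a non-ergodic extension of Proposition \ref{erghmulogm} instead of invoking total ergodicity. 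The trade-off: the paper's route needs no ergodic-decomposition machinery, only mixing of Bernoulli shifts, but argues indirectly and through an auxiliary system; yours is direct and stays on $T$ itself, at the cost of two additional (standard) inputs, the cyclic decomposition of powers of an ergodic automorphism and Abramov's formula. A small bonus of your write-up: your derivation of $h_{\mu}(g,T^N)\le Nh_{\mu}(g,T)$ makes explicit the refinement monotonicity of $H(g,\cdot)$ (from subadditivity of $g$) which the paper's proof of Lemma \ref{lemma73} uses tacitly when it takes the supremum only over partitions of the form $\bigvee_{i=0}^{N-1}T^{-i}\mcp$.
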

Moreover for $g\in\G^{\infty}$ from Theorem \ref{hgpgeqM} we have
\begin{cor}
Let $g\in\G^{\infty}$. If $(X,T)$ is aperiodic and surjective than $h_{\mu}(g,T)=\infty$.
\end{cor}
To prove Theorem \ref{twhKSg} we need first few preliminary lemmas.
\begin{lemma} \label{lemma73}
If $T$ is an automorphism of the Lebesgue space $(X,\Sigma,\mu)$, then for every $g\in\G$   \[h_{\mu}(g,T^m)\leq mh_{\mu}(g,T).\]
\end{lemma}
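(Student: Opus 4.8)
The plan is to follow the classical argument for $h_\mu(T^m)\leq m\,h_\mu(T)$ in the Shannon case, adapting it to the $\limsup$ in the definition of the $g$-entropy. The two ingredients I would isolate first are: (i) monotonicity of the static $g$-entropy under refinement of partitions, and (ii) the elementary reindexing identity relating the $T^m$-refinements of a ``blocked'' partition to the $T$-refinements of the original partition.

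For (i): if a finite partition $\mcq$ refines a finite partition $\mcp$, then every atom $A\in\mcp$ is a disjoint union of atoms $B\in\mcq$, so the subadditivity of $g$ (which holds for every $g\in\G$, as recalled in Section~2) gives $g(\mu(A))=g\bigl(\sum_{B\subseteq A}\mu(B)\bigr)\leq\sum_{B\subseteq A}g(\mu(B))$; summing over $A\in\mcp$ yields $H(g,\mcp)\leq H(g,\mcq)$. Since the refinement relation is preserved by the measure-preserving maps $T^{-j}$ and by taking joins, it follows that $h(g,S,\cdot)$ is monotone under refinement for any automorphism $S$, in particular for $S=T$ and $S=T^m$.

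For (ii): fix a finite partition $\mcr$ and set $\mcq:=\mcr\vee T^{-1}\mcr\vee\cdots\vee T^{-(m-1)}\mcr$. A direct reindexing of the exponents gives the key identity
\[
\bigvee_{i=0}^{n-1}(T^m)^{-i}\mcq=\bigvee_{l=0}^{mn-1}T^{-l}\mcr=\mcr_{mn}.
\]
Hence $h(g,T^m,\mcq)=\limsup_{n\to\infty}\frac1n H(g,\mcr_{mn})=m\,\limsup_{n\to\infty}\frac{1}{mn}H(g,\mcr_{mn})$. Because a $\limsup$ taken along the subsequence $k=mn$ cannot exceed the $\limsup$ along all $k$, the right-hand side is at most $m\,h(g,T,\mcr)$. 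Combining this with the monotonicity from (i), and using $\mcr\leq\mcq$, I obtain $h(g,T^m,\mcr)\leq h(g,T^m,\mcq)\leq m\,h(g,T,\mcr)\leq m\,h_{\mu}(g,T)$. Taking the supremum over all finite partitions $\mcr$ yields $h_{\mu}(g,T^m)\leq m\,h_{\mu}(g,T)$.

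The only place where care is needed — the single point that differs from the Shannon argument — is that for general $g\in\G$ the sequence $\frac1n H(g,\mcr_n)$ need not converge, so the definition genuinely uses $\limsup$. I must therefore make sure the subsequence estimate is invoked in the correct direction: restricting to the subsequence $k=mn$ can only decrease or preserve the upper limit, which is exactly the inequality I need. Everything else is purely combinatorial and measure-theoretic, relying only on the subadditivity of $g$ and not on the existence of any limit; I expect no essential obstacle beyond this bookkeeping with the upper limits.
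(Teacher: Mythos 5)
Your proof is correct and takes essentially the same route as the paper's: the paper likewise passes to the blocked partition $\bigvee_{i=0}^{m-1}T^{-i}\mcp$, uses the reindexing identity to identify its $T^m$-refinements with $\mcp_{km}$, and invokes the subsequence-$\limsup$ bound before taking suprema. The only difference is that you spell out the refinement-monotonicity step (via subadditivity of $g$), which the paper leaves implicit in the final ``taking supremum over all finite partitions''; this makes your write-up slightly more complete, not different in substance.
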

\begin{proof}
Let  $\mcp$ be a finite partition, $m\in\mathbb{N}$. We have
\begin{eqnarray}
h(g,T,\mcp)&=&\limsup_{k\to\infty}\frac 1k H(g,\mcp\vee T^{-m}\mcp\vee\ldots\vee T^{-m(k-1)}\mcp) \nonumber \\ &=& \lim_{n\to\infty}\sup_{k\geq n}\frac 1k H(g,\mcp\vee T^{-m}\mcp\vee\ldots\vee T^{-m(k-1)}\mcp). \nonumber
\end{eqnarray}
Fix $k\in \mathbb{N}$. Then the partition $\bigvee\limits_{i=0}^{n-1}T^{-i}\mcp$ is a refinement of  $\mcp\vee T^{-m}\mcp\vee\ldots\vee T^{-m(k-1)}\mcp$ for $n=km,\ldots,km+m-1$. Therefore
\begin{eqnarray}
\frac 1k H(g,\mcp\vee T^{-m}\mcp\vee\ldots\vee T^{-m(k-1)}\mcp) &\leq & \frac 1k H(g,\mcp_{n-1})=\frac nk\frac 1n H(g,\mcp_{n-1}) \nonumber \\
&\leq & \frac{km+m-1}{k}\frac 1n H(g,\mcp_{n-1})\nonumber \\ &\leq & m\left(1+\frac 1k\right)\frac 1n H(g,\mcp_{n-1})\label{72}
\end{eqnarray}
for $n=km,\ldots,km+m-1$. Let introduce the following notation:
\[c_k:=\frac 1k H(g,\mcp\vee T^{-m}\mcp\vee\ldots\vee T^{-m(k-1)}\mcp),\;\;\; a_n :=\frac 1n H(g,\mcp_{n-1}). \]
Then we can rewrite (\ref{72}) in the form \begin{equation} \label{73ref}c_k\leq m\left(1+\frac 1k\right)a_n\end{equation}
for $n=km,\ldots,km+m-1$. Taking supremum in (\ref{73ref}) we obtain
\[\sup_{l\geq k}c_l \leq m\left(1+\frac 1k\right) \sup_{n=lm,\ldots,lm+m-1}a_n\leq m\left(1+\frac 1k\right) \sup_{n\geq km}a_n.\]
Therefore \[\liminf_{k\to\infty}c_k\leq m\limsup_{n\to\infty}a_n,\]
and this is equivalent to the statement
\[h(g,T^m,\mcp)\leq m h(g,T,\mcp).\]
Taking supremum over all finite partitions we obtain the assertion.
\end{proof}

Next lemma will be just a weaker version of Theorem \ref{twhKSg}. 
\begin{lemma} \label{calkergcg}
If an automorphism $T^m$ of a Lebesgue space $(X,\Sigma,\mu)$ is ergodic for every $m\in\mathbb{N}$, then for every function $g\in\G$, such that $\Cs(g)<\infty$ holds  \[h_{\mu}(g,T)=\Cs(g)\cdot h_{\mu}(T).\]
If $g\in\G^0$, then $h_{\mu}(g,T)=0$. 
If $g\in\G$ is such that $\Cs(g)=\infty$ and $T$ has positive Kolmogorov-Sinai entropy, then $h_{\mu}(g,T)=\infty$.
\end{lemma}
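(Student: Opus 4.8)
The plan is to prove Lemma~\ref{calkergcg}, which is the statement that for an automorphism $T$ with $T^m$ ergodic for all $m$, and $g\in\G$ with $\Cs(g)<\infty$, one has $h_{\mu}(g,T)=\Cs(g)\cdot h_{\mu}(T)$, together with the two degenerate cases. I would organize the argument as a pair of matching inequalities, treating the three regimes ($\Cs(g)$ finite and positive, $g\in\G^0$, and $\Cs(g)=\infty$) through the same machinery.

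First I would establish the upper bound $h_{\mu}(g,T)\le \Cs(g)\cdot h_{\mu}(T)$. The idea is to fix an arbitrary finite partition $\mcp$ and apply Theorem~\ref{hgp} with $g_1=g$ and $g_2=\eta$: since $\Cs(g)<\infty$ means $\limsup_{x\to 0^+}g(x)/\eta(x)<\infty$, the theorem gives $h(g,\mcp)\le \Cs(g)\cdot h(\mcp)$, where $h(\mcp)=h_{\mu}(T,\mcp)$ is the ordinary Shannon dynamical entropy. Taking the supremum over all finite partitions $\mcp$ on the left and bounding the right-hand side by $\Cs(g)\cdot\sup_{\mcp}h(\mcp)=\Cs(g)\cdot h_{\mu}(T)$ yields the upper bound directly. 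For $g\in\G^0$ this specializes to $\Cs(g)=0$, so $h_{\mu}(g,T)=0$, settling that case outright.

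For the lower bound $h_{\mu}(g,T)\ge \Cs(g)\cdot h_{\mu}(T)$ I would exploit the ergodicity of all powers $T^m$ together with Proposition~\ref{erghmulogm}. The strategy is to approximate $h_{\mu}(T)$ from below by quantities of the form $\tfrac{1}{m}\log M$: given any target value below $\Cs(g)\cdot h_{\mu}(T)$, I would choose an integer $m$ and an integer $M\ge 2$ so that $T^m$ is ergodic with $h_{\mu}(T^m)=m\,h_{\mu}(T)\ge \log M$ and $\tfrac{1}{m}\log M$ approximates $h_{\mu}(T)$ well. Applying Proposition~\ref{erghmulogm} to the ergodic automorphism $T^m$ gives $h_{\mu}(g,T^m)\ge \Cs(g)\cdot\log M$, and then Lemma~\ref{lemma73}, which reads $h_{\mu}(g,T^m)\le m\,h_{\mu}(g,T)$, converts this into
\[
h_{\mu}(g,T)\ \ge\ \frac{1}{m}h_{\mu}(g,T^m)\ \ge\ \Cs(g)\cdot\frac{\log M}{m}.
\]
Letting $\tfrac{1}{m}\log M\to h_{\mu}(T)$ completes the lower bound when $h_{\mu}(T)<\infty$. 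The same chain, with $M$ and $m$ chosen so that $\tfrac{1}{m}\log M\to\infty$, handles the case $h_{\mu}(T)=\infty$, and also gives $h_{\mu}(g,T)=\infty$ when $\Cs(g)=\infty$ and $h_{\mu}(T)>0$, since then $\Cs(g)\cdot\tfrac{1}{m}\log M$ can be made arbitrarily large.

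The main obstacle I anticipate is the arithmetic of the approximation $\tfrac{1}{m}\log M\to h_{\mu}(T)$: the reason one passes to the powers $T^m$ at all is that Proposition~\ref{erghmulogm} only delivers the bound in increments of $\log M$ for integer $M$, which is too coarse to reach $\Cs(g)\cdot h_{\mu}(T)$ directly. By first scaling up via $T^m$ (so the available entropy budget is $m\,h_{\mu}(T)$) and then scaling back down by $\tfrac{1}{m}$ through Lemma~\ref{lemma73}, the set of approximants $\{\tfrac{1}{m}\log M : m\in\mathbb{N},\, M\ge 2,\ \log M\le m\,h_{\mu}(T)\}$ becomes dense below $h_{\mu}(T)$, which is exactly what makes the limit work. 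Verifying this density and confirming that $T^m$ remains ergodic (guaranteed by hypothesis) is the only genuinely delicate point; the rest is an assembly of Theorem~\ref{hgp}, Proposition~\ref{erghmulogm}, and Lemma~\ref{lemma73}.
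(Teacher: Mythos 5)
Your proposal is correct and follows essentially the paper's own route: both arguments rest on exactly the same three ingredients, namely the upper bound $h_{\mu}(g,T)\leq \Cs(g)h_{\mu}(T)$ from Theorem \ref{hgp} with $g_2=\eta$, Proposition \ref{erghmulogm} applied to the ergodic powers $T^m$, and Lemma \ref{lemma73} combined with $h_{\mu}(T^m)=mh_{\mu}(T)$. The only difference is presentational: you get the lower bound directly by letting $\frac{1}{m}\log M\to h_{\mu}(T)$ (your density observation is exactly right, e.g.\ take $M=\left[e^{mh_{\mu}(T)}\right]$), whereas the paper assumes a positive gap $\Cs(g)h_{\mu}(T)-h_{\mu}(g,T)>0$, amplifies it by passing to $T^m$ until some $\Cs(g)\log M$ fits strictly inside, and contradicts Proposition \ref{erghmulogm} --- the same argument in contrapositive form.
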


\begin{proof}
Case of  $g\in\G^0$ follows from Corollary \ref{Cgfinite}.
Suppose that there exists such $g\in\G\backslash \G^0$ which fullfills assumptions of lemma and for which we have \[\Cs(g)\cdot h_{\mu}(T)-h_{\mu}(g,T)>0.\]
Then applying Lemma \ref{lemma73} to the transformation $T^m$ and using equality \mbox{$h_{\mu}(T^m)=mh_{\mu}(T)$} (see \cite[Thm 4.3.16]{KH}) we obtain
\[\Cs(g)h_{\mu}(T^m)-h_{\mu}(g,T^m)\geq m\left(\Cs(g)h_{\mu}(T)-h_{\mu}(g,T)\right) \rightarrow \infty\;\; \text{as}\;\;m\to\infty.\]
Therefore for sufficiently large $m$ there exists an integer $M$ for which 
\begin{equation}\label{calerg1} h_{\mu}(g,T^m)\leq m h_{\mu}(g,T)<\Cs(g)\log M\leq m \Cs(g)h_{\mu}(T)=\Cs(g)h_{\mu}(T^m).\end{equation}
Proposition \ref{erghmulogm} applied to the transformation $T^m$ guarantees that for every  $g\in \G$ with positive (finite) $\Cs(g)$ we have \begin{equation}\label{calerg2} h_{\mu}(g,T^m)\geq \Cs(g)\log M.\end{equation}
Comparing (\ref{calerg1}) and (\ref{calerg2}) we obtain the contradiction, which implies that \[h_{\mu}(g,T)=\Cs(g)h_{\mu}(T).\]
If  $\Cs(g)=\infty$ and $h_{\mu}(T)>0$ then  there exists such integer $m>0$ that \[h_{\mu}(T^m)=mh_{\mu}(T)>\log M\]  and by Proposition \ref{erghmulogm} and Lemma \ref{lemma73} \[h_{\mu}(g,T)=h_{\mu}(g,T^m)=\infty\] which completes the proof.
\end{proof}

\begin{proof}[Proof of Theorem \ref{twhKSg}]
If $h_{\mu}(T)=0$ theorem is true, since for any partition $\mcp$ we have \[0\leq h(g,\mcp)\leq \Cs(g) h(\mcp)=0.\]
Suppose that $0<h_{\mu}(T)<\infty$. Automorphism $T$ is ergodic. Therefore it has factor which is a Bernoulli automorphism  $T'$ with entropy $h_{\mu}(T)=h_{\mu}(T')$. Every Bernoulli automorphism is mixing, so $T^m$ is ergodic for each $m$.
Applying Lemma \ref{calkergcg} we obtain \[h_{\mu}(g,T')=\Cs(g) h_{\mu}(T')=\Cs(g) h_{\mu}(T).\]
Since $T'$ is a factor of $T$, so  Proposition \ref{factorhmu} implies that
\[\Cs(g)h_{\mu}(T)=\Cs(g)h_{\mu}(T')=h_{\mu}(g,T')\leq  h_{\mu}(g,T)\leq \Cs(g)h_{\mu}(T)\]
which completes the proof of the case of finite  $h_{\mu}(T)$. If $h_{\mu}(T)=\infty$, then Proposition \ref{erghmulogm} implies that \[h_{\mu}(g,T)\geq \Cs(g)\log M\]
for every $M>0$ and the theorem is proved.
\end{proof}

\subsection{Generator theorem counterpart} In the case of $g\in\G^{\infty}$ there is no counterpart of a Kolmogorov-Sinai generator theorem, which says that the measure-theoretic entropy of the transformation $T$ is realised on every generator of the $\sigma$-algebra $\Sigma$. 
Let us consider Sturm shifts -- shifts which model translations of the circle  $\mathbb{T}=[0,1)$. Let $\beta \in [0,1)$ and consider the translation $\phi_{\beta}\colon [0,1)\mapsto [0,1)$ defined by $\phi_{\beta}(x)=x+\beta \;(\mod 1)$. Let $\mcp$ denote the partition of $[0,1)$ given by $\mcp=\{[0,\beta),[\beta,1)\}$. Then we associate a binary sequence to each $t\in[0,1)$ according to its itinerary relative to $\mcp$; that is we associate to $t\in[0,1)$ the bi-infinite sequence $x$ defined by $x_i=0$ if $\phi_{\beta}^i(t)\in[0,\beta)$ and $x_i=1$ if $\phi_{\beta}^i(t)\in[\beta,1)$. The set of such sequences is not necessary closed, but it is shift-invariant and so its closure is a~shift space called Sturmian shift. If $\beta$ is irrational, then Sturmian shift is minimal, i.e. there is no proper subshift. Moreover for a minimal Sturmian shift, the number of $n$-blocks which occur in an infinite shift space is exactly $n+1$. Therefore  for zero-coordinate partition $\mcp^{\mathcal{A}}$, which is a finite generator of $\sigma$-algebra $\Sigma$ and for any function $g\in\G$ we have
\[H(g,\mcp_n^{\mathcal{A}})=\sum_{A\in\mcp_n^{\mathcal{A}}}g(\mu_S(A))\leq  \varphi\left(\frac{1}{n+1}\right)\]  where $\mu_S$ is the unique invariant measure for Sturm shift.  Thus,
\[h(g,\mcp^{\mathcal{A}})\leq \limsup_{n\to\infty}\frac{n+1}{n}g\left(\frac{1}{n+1}\right)=0.\] 
On the other hand since it is strictly ergodic (and thus aperiodic)  Theorem \ref{hgpgeqM} implies that  for any function $g\in\G^{\infty}$ \[h_{\mu}(g,T)=\infty,\] therefore we have a finite generator, for which the supremum is not attained.


\end{document}